\DeclareMathOperator{\Ker}{Ker}
\DeclareMathOperator{\Coker}{Coker}
\theoremstyle{plain}
\newtheorem{theorem}{Theorem}[section]
\newtheorem{lemma}[theorem]{Lemma}
\newtheorem{corollary}[theorem]{Corollary}
\newtheorem{proposition}[theorem]{Proposition}
\theoremstyle{definition}
\newtheorem{remark}[theorem]{Remark}
\newtheorem{notation}[theorem]{Notation}
\newtheorem{definition}[theorem]{Definition}
\newtheorem{example}[theorem]{Example}
\newcommand{\tree}{\mathcal T}
\newcommand{\vertices}{\mathcal V}
\newcommand{\edges}{\mathcal E}
\newcommand{\tfg}{\mathcal F}
\newcommand{\Aut}{\operatorname{Aut}}
\newcommand{\Fix}{\operatorname{Fix}}
\newcommand{\Sym}{\operatorname{Sym}}
\newcommand{\ccol}{\operatorname{ccol}}
\newcommand{\prm}{\operatorname{prm}}
\newcommand{\lpc}{\operatorname{lpc}}
\newcommand{\AAut}{\operatorname{AAut}}
\newcommand{\frakg}{\mathfrak{g}}
\newcommand{\gpoid}{\mathcal{G}}
\begin{document}

\title{Topological full groups and t.d.l.c. completions of Thompson's~$V$}
\author{Waltraud Lederle \\ \scriptsize{Ben Gurion University of the Negev}}
\date{}
\maketitle\unmarkedfntext{This work was partially supported by Israel Science Foundation grant ISF 2095/15.}

\begin{abstract}
We show how all topological full groups coming from a one-sided irreducible shift of finite type, as studied by Matui, can be re-interpreted as groups of colour-preserving tree almost automorphisms.
As an application, we show that they admit t.d.l.c. completions of arbitrary finite local prime content.
This applies in particular to Thompson's $V$. 
\end{abstract}

\section{Introduction}

In 1965 Thompson constructed groups to describe rules for rearranging formal expressions, which are now commonly called Thompson's $F, T$ and $V$.
The groups $T$ and $V$ are simple and finitely presented; they were the first infinite groups with these properties.
Higman \cite{h74} generalized Thompson's group $V$ by introducing algebras of type $n$, the extension of these generalizations to $T$ and $F$ were first considered by Brown \cite{b87},
and all are now commonly known as the Higman--Thompson groups $V_{d,k}, T_{d,k}$ and $F_{d,k}$.
Since then many other generalizations have occurred, for example "higher dimensional" versions by Brin.
The generalization we focus on in the present paper is by Matui, where $V$ is viewed as the topological full group of a groupoid associated to a one-sided irreducible shift of finite type.

Caprace and De Medts \cite{cm11} observed that the Higman-Thompson groups $V_{d,k}$ embed densely into groups of tree almost automorphisms, which are totally disconnected, locally compact (t.d.l.c.) groups. Typically these groups are denoted $\AAut_D(\tree_{d,k})$ in the literature. More precisely, they found one completion of $V_{d,k}$ for every conjugacy class of subgroups of $\Sym(d)$, but it is unclear whether some of these completions are isomorphic.
It is often a non-trivial task to determine for t.d.l.c. groups whether they are isomorphic, or whether they have isomorphic compact open subgroups or not. One method is to investigate for which prime numbers $p$ the group contains an infite pro-$p$ subgroup. This set of primes is called the \emph{local prime content} and was introduced by Gl\"ockner \cite{g06}.

\begin{definition}
	Let $H$ be a (topological) group. A \emph{completion} of $H$ is a locally compact group $G$ together with a (continuous) group homomorphism $f \colon H \to G$ with dense image. The completion is called t.d.l.c. if $G$ is a t.d.l.c. group.
\end{definition}

The following question was suggested to me by Pierre-Emmanuel Caprace. Do all topological full groups coming from one-sided shifts, in the sense of Matui, admit a t.d.l.c. completion? In the present work we give a positive answer.

\begin{theorem}[see Theorem \ref{thm:mainthm}]
	Let $P$ be a finite set of primes. Every topological full group of the groupoid associated to a one-sided irreducible shift of finite type has a t.d.l.c. completion with local prime content $P$.
\end{theorem}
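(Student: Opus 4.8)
The plan is to put the given topological full group $\tfg$ into a tree--almost--automorphism picture, in the spirit of Caprace and De Medts, and then read off (and tune) the local prime content from the size of the permitted local action. Throughout I assume the SFT is infinite, so its Perron eigenvalue exceeds $1$ and $\tfg$ is infinite; the case $P=\varnothing$ is settled by taking $G=\tfg$ with the discrete topology.

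\emph{Step 1: completions by enlarging the local action.} Granting the re-interpretation announced above, write $\tfg$ as the group of \emph{colour-preserving} almost automorphisms of the coloured tree $\tree=\tree_A$ of admissible finite words of the SFT $A$, the colours being the letters. For any family $F=(F_c)_c$ with $F_c\le\Sym(\operatorname{Fol}(c))$ a permutation group on the follower set of each letter $c$, let $\AAut_F(\tree)$ be the group of almost automorphisms that carry each cylinder to a cylinder of the same type and whose local action at every sufficiently deep vertex of type $c$ lies in $F_c$. Then $\AAut_{\{e\}}(\tree)=\tfg$, and $\AAut_F(\tree)$ is a t.d.l.c.\ group: the subgroups $U_T$, where $T$ ranges over finite balls around the root and $U_T$ fixes $T$ pointwise and acts on the hanging subtrees with local action in $F$, are compact open and form a neighbourhood basis of the identity, each $U_T$ being an inverse limit of iterated wreath products of the groups $F_c$. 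The inclusion $\tfg\hookrightarrow\AAut_F(\tree)$ has dense image: given $g$ and $T$ one replaces the subtree isomorphisms constituting $g$ by the colour-preserving ones between the same (same-type) cylinders, producing $h\in\tfg$ with $g^{-1}h\in U_T$.

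\emph{Step 2: local prime content.} Since $A$ is irreducible, every letter is the type of arbitrarily deep vertices, and the number of such vertices grows (Perron--Frobenius); hence the finite truncations of $U_T$ have orders that are unbounded powers of precisely the primes dividing some $|F_c|$. It follows that $U_T$ --- and therefore $\AAut_F(\tree)$, all of whose compact open subgroups are commensurable with $U_T$ --- has local prime content $\bigcup_c\pi(|F_c|)$, the set of primes dividing some $|F_c|$. So if $A$ had a letter $c^\ast$ with $|\operatorname{Fol}(c^\ast)|\ge\sum_{p\in P}p$ we would be done: take $F_{c^\ast}=\prod_{p\in P}C_p$ acting on disjoint orbits inside $\operatorname{Fol}(c^\ast)$ and $F_c=\{e\}$ otherwise, so that $\bigcup_c\pi(|F_c|)=P$ exactly.

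\emph{Step 3: the recoding, which is the crux.} In general the follower sets of $A$ are too small (Thompson's $V$ is the full group of the full $2$-shift, and of the golden-mean shift, neither with a follower set of size $\ge 3$). The remaining task is therefore a recoding lemma: every infinite irreducible one-sided SFT is continuously orbit equivalent --- equivalently, has isomorphic groupoid, hence isomorphic topological full group --- to one whose graph has a vertex of out-degree $\ge N$, for every $N$. The tool I would use is an explicit ``fat loop'' gadget: the continuous-orbit-equivalence class of $A$ is determined by $\Coker(I-A^{t})$ with its distinguished element and by $\det(I-A)$ (Matsumoto--Matui), and one chooses a representative together with a vertex $a$ lying on a unique simple cycle, then adjoins a long loop $z_1\to z_2\to\cdots\to z_L\to z_1$ carrying multiplicity $m$ on the edge $z_1\to z_2$, attached by edges $a\to z_1$ and $z_L\to a$; a Schur-complement computation shows that, for a suitable choice of $a$, this leaves $\det(I-A)$ and the cokernel data unchanged for every $m$, while the edge $z_L\to z_1$ acquires a follower set of size $m$. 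For Thompson's $V$ this is completely explicit: attach the loop at the self-loop vertex of the golden-mean graph; then $\det(I-B)=-1$, so $\Coker(I-B^{t})$ stays trivial. Pinning down the choice of $a$ in general, and controlling $\Coker(I-B^{t})$ as a group with unit class when $\det(I-A)\ne\pm1$, is the main obstacle; once it is done, Step 2 applied to the recoded SFT produces a t.d.l.c.\ completion of $\tfg$ with local prime content exactly $P$.
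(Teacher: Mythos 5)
Your Steps 1 and 2 follow essentially the paper's own route (Theorem \ref{thmshifttoalmaut}, Lemma \ref{lem:basicpropaautp}, Proposition \ref{prop:lpcisproduct}): realize the topological full group as the colour-preserving almost automorphisms of the tree of finite admissible words, enlarge the permitted local action to get a t.d.l.c.\ group in which it sits densely, and read off the local prime content from the orders of the local groups. One imprecision there: the relevant quantity is not the size of a follower set (out-degree). The local groups must preserve the partition of the children of a vertex by label, so each cycle you prescribe has to permute \emph{parallel} edges of the graph; what you need is a large entry of the adjacency matrix, not a large row sum. Your eventual gadget does produce parallel edges, so this is a matter of phrasing, but it determines what Step 3 actually has to deliver.

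The genuine gap is Step 3, and you acknowledge it yourself: the recoding lemma (same topological full group, but some adjacency entry at least $N=\prod_{p\in P}p$) is the crux, and your fat-loop/Schur-complement strategy is left as ``the main obstacle,'' including the control of the unit class when $\det(id-M^t)\neq\pm1$. The paper does not modify the given graph at all; it replaces it wholesale. Writing $\Coker(id-M^t)\cong\bigoplus_i\mathbb{Z}/d_i\mathbb{Z}$, one starts from the diagonal matrix $\operatorname{diag}(-1,-d_1,\dots,-d_n)$ (or with two entries $-1$, whichever matches the sign of $\det(id-M^t)$), adds the first row to all other rows, then adds $N$ times the first column to all other columns. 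These unimodular operations preserve determinant and cokernel, and the resulting matrix $A$ has all entries $\leq -1$, so $\widetilde M=id-A^t$ is a strictly positive integer matrix: automatically irreducible, non-circular, and with an entry equal to $N$. Matsumoto's theorem (Theorem \ref{thm:matsthm}) then identifies the reduced groupoids, and the unit-class worry dissolves via Lemma \ref{lem:homimage}: one need not preserve $[1_Y]$ under a graph modification, only choose a clopen set $\widetilde Y$ in the new shift representing the image of $[1_Y]$ under the cokernel isomorphism, which is always possible, and the tree construction applies to reductions. Without this recoding step (or a complete substitute for it), your argument does not yet prove the theorem.
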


The proof idea of the theorem is the following. In previous work \cite{l17} we showed how certain groups of tree almost automorphisms can be written as topological full groups coming from one-sided shifts in the sense of Matui \cite{m15}.
In Theorem \ref{thmshifttoalmaut} we show that the reverse is always possible, i.e. for every irreducible one-sided shift of finite type there exists a tree with an appropriate colouring such that the topological full group of the shift is the group of colour-preserving tree almost automorphisms. Results by Matsumoto allow us to find different shifts with the same topological full group. By rewriting them as tree almost automorphism groups we find different completions of the same group.

\begin{remark}
	It is a result by Matui that all topological full groups coming from one-sided shifts have simple commutator subgroup. We show that the closures of the commutator subgroups in the completions we find are topologically simple, see Proposition \ref{prop:simple}. It would be interesting to know whether all completions of topological full groups need to be topologically, or even abstractly, simple.
\end{remark}

\paragraph*{}
Le Boudec and Wesolek \cite{lbw16} investigated commensurability classes of commensurated subgroups, which are closely connected to t.d.l.c. completions (see e.g. Reid and Wesolek \cite{rw15}), of tree almost automorphism groups of regular trees. For Thompson's $F$ and $T$ they completely determine the commensurability classes of commensurated subgroups, showing that none of these groups allows a non-discrete t.d.l.c. completion, see \cite{lbw16} Section 6.2. For $V$ they find the following restriction.

\begin{proposition}[\cite{lbw16}, Proposition 6.4]
	Let $H \leq V$ be a proper commensurated subgroup of $V$. Then $H$ is locally finite, i.e. every finitely generated subgroup of $H$ is finite.
\end{proposition}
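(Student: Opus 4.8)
The plan is to prove the contrapositive: if $H \leq V$ is commensurated and not locally finite, then $H = V$. I would realise $V$ as a group of homeomorphisms of the Cantor set $\mathfrak{C} = \{0,1\}^{\mathbb{N}}$ acting by prefix replacements and use three standard features of this action: it is faithful and minimal; for every clopen set $\emptyset \neq U \subsetneq \mathfrak{C}$ the rigid stabiliser $V_U := \{\, v \in V : v|_{\mathfrak{C}\setminus U} = \mathrm{id} \,\}$ is isomorphic to $V$; and the action is extremely proximal, i.e.\ for any nonempty clopen sets $C \subsetneq \mathfrak{C}$ and $D$ there is $g \in V$ with $g(C) \subseteq D$. (Recall also that a proper commensurated subgroup of $V$ automatically has infinite index, since any finite-index subgroup of the simple infinite group $V$ equals $V$.)

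The first step is an easy reduction: it suffices to find \emph{one} clopen set $\emptyset \neq U \subsetneq \mathfrak{C}$ with $V_U \leq H$. Indeed, for any $g \in V$ one has $gV_Ug^{-1} = V_{g(U)} \leq gHg^{-1}$, while $H \cap gHg^{-1}$ has finite index in $gHg^{-1}$ because $H$ is commensurated; hence $V_{g(U)} \cap H = V_{g(U)} \cap (H \cap gHg^{-1})$ has finite index in $V_{g(U)}$. Since $V_{g(U)} \cong V$ is infinite and simple it has no proper finite-index subgroup (the normal core of one would be a finite-index normal subgroup of $V_{g(U)}$, hence all of it), so $V_{g(U)} \leq H$ for every $g \in V$. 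Therefore $H$ contains the normal closure of $V_U$ in $V$, which is $V$ because $V_U \neq 1$ and $V$ is simple, and we are done.

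It remains to place a rigid stabiliser inside $H$, and this is the heart of the argument. Since $H$ is not locally finite it contains a finitely generated infinite subgroup $K$, from which I would first extract an element $h$ of infinite order. By the standard description of the dynamics of a non-torsion element of $V$ on $\mathfrak{C}$, such an $h$ has a clopen contraction: a clopen set $\emptyset \neq U_0 \subsetneq \mathfrak{C}$ with $\overline{h(U_0)} \subsetneq U_0$, so that, writing $W := U_0 \setminus \overline{h(U_0)}$ for the (nonempty, clopen) fundamental domain and $p$ for the attracting fixed point of $h$ in $U_0$, one has $U_0 = \{p\} \sqcup \bigsqcup_{n \geq 0} h^n(W)$ with the translates $h^n(W)$ pairwise disjoint and accumulating only at $p$. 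The plan is to exploit this contraction of $h \in H$ together with the extreme proximality of the action and the finiteness of the indices $[H : H \cap gHg^{-1}]$: conjugating by suitable elements of $V$ to push pieces deep into $U_0$ and using powers of $h$ to shuffle them toward $p$, one should be able to trap a finite-index --- hence, by the previous paragraph, the whole --- rigid stabiliser $V_U$ of a small clopen neighbourhood $U$ of $p$ inside $H$.

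The main obstacle is exactly this last construction: manufacturing an infinite \emph{simple} subgroup (a rigid stabiliser) of $H$ out of a merely finitely generated infinite one, so that commensurability can then be brought to bear. This is where the micro-supported, extremely proximal dynamics of $V$ on the Cantor set are indispensable, and where it is natural to appeal to --- or to adapt to the present setting --- the structure theory of commensurated subgroups of groups admitting such actions developed in \cite{lbw16}. A secondary point to settle along the way is the reduction to an element of infinite order, i.e.\ that a finitely generated infinite subgroup of $V$ cannot be torsion; absent a convenient reference, one can instead arrange the dynamical argument to run directly with $K$.
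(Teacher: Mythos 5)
This proposition is imported verbatim from \cite{lbw16} (Proposition 6.4); the paper you are reading gives no proof of it, so your attempt can only be judged on its own terms. The part of your argument that is actually carried out is correct: if some rigid stabiliser $V_U$ (with $U$ a nonempty proper clopen set) lies in $H$, then for every $g\in V$ the group $V_{g(U)}=gV_Ug^{-1}$ sits inside $gHg^{-1}$, commensuration makes $V_{g(U)}\cap H$ of finite index in $V_{g(U)}\cong V$, simplicity of $V$ rules out proper finite-index subgroups, and the normal closure argument then forces $H=V$. That reduction is complete and standard.

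The genuine gap is the step you yourself flag as ``the heart of the argument'': producing a single rigid stabiliser (or indeed any supply of elements of $H$ with prescribed small clopen support) from the hypotheses that $H$ is commensurated and not locally finite. Your text only describes a hoped-for manoeuvre (``one should be able to trap a finite-index \dots rigid stabiliser $V_U$ \dots inside $H$'') and then proposes to ``appeal to --- or to adapt --- the structure theory of commensurated subgroups \dots developed in \cite{lbw16}''. That is circular: Proposition 6.4 is a result of that very paper, and the trapping construction is precisely what its proof has to supply. Nothing in your sketch explains how the finiteness of the indices $[gHg^{-1}:H\cap gHg^{-1}]$ converts the contracting element $h\in H$ into elements of $H$ supported near the attracting point, which is where all the work lies. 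Moreover the preliminary reduction to an element of infinite order is itself unsupported: it needs the nontrivial fact that an infinite finitely generated subgroup of $V$ cannot be a torsion group, which you neither prove nor reference, and the fallback ``run the dynamical argument directly with $K$'' has no content because that argument is never given. As it stands, the proposal proves only the easy implication and leaves the essential construction to the source being cited.
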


Le Boudec and Wesolek leave open, Question 6.5 and Question~6.6 in \cite{lbw16}, whether Thompson's $V$ has more than three commensurability classes of commensurated subgroups, or whether it has more than one non-discrete locally compact completion. We answer both questions.

\begin{corollary}[see Example \ref{ex:v}]
	Thompson's group $V$ has at least countably many commensurability classes of commensurated subgroups.
	Moreover, there are at least countably many locally non-isomorphic groups of tree almost automorphisms into which $V$ embeds densely.
\end{corollary}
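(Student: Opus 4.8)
The plan is to feed $V$ into the two main theorems of the paper. Thompson's $V$ is, in Matui's sense, the topological full group of the groupoid associated to the one-sided full shift on two symbols, which is irreducible and of finite type; so Theorem~\ref{thm:mainthm} and Theorem~\ref{thmshifttoalmaut} both apply to it. Fix a countably infinite family $(P_n)_{n\in\mathbb N}$ of pairwise distinct finite sets of primes, e.g.\ $P_n=\{p_n\}$ with $p_n$ the $n$-th prime. By Theorem~\ref{thm:mainthm} there is, for each $n$, a t.d.l.c.\ completion $f_n\colon V\to G_n$ with $\lpc(G_n)=P_n$, and by Theorem~\ref{thmshifttoalmaut} we may arrange that $G_n$ is a group of colour-preserving tree almost automorphisms of a suitable tree. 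Since $V$ is simple and nonabelian while $G_n$ is non-discrete, each $f_n$ is injective.

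For the second assertion I would argue that $\lpc$ is computed inside any compact open subgroup, hence is an invariant of the local isomorphism class of a t.d.l.c.\ group. As the $P_n$ are pairwise distinct, the groups $G_n$ are pairwise locally non-isomorphic, and each of them contains $V$ as a dense subgroup; this yields the required countable family of tree almost automorphism groups.

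For the first assertion I would use the dictionary between t.d.l.c.\ completions and commensurated subgroups (see \cite{rw15}). Fix a compact open subgroup $U_n\le G_n$ and put $H_n:=f_n^{-1}(U_n)\le V$. The preimage of a compact open subgroup under a homomorphism with dense image into a t.d.l.c.\ group is commensurated, so each $H_n$ is commensurated in $V$ (and, consistently with \cite{lbw16}, Proposition~6.4, it is proper and locally finite). Now $[V,V]=V$, so $\overline{[G_n,G_n]}=G_n$, and Proposition~\ref{prop:simple} shows that $G_n$ is topologically simple; since $V$ embeds into $G_n$ and an infinite simple group does not embed into a profinite group, $G_n$ is non-compact, hence has no non-trivial compact normal subgroup. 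Consequently the normal core of $U_n$ in $G_n$ is trivial, the $G_n$-action on $G_n/U_n\cong V/H_n$ is faithful, and a routine argument (the closure of a transitive faithful permutation action with compact open point stabilisers recovers the group) identifies $G_n$ with the closure of the image of $V$ in $\Sym(V/H_n)$, i.e.\ with the Schlichting completion of $V$ relative to $H_n$. Since the Schlichting completion depends only on the commensurability class of the subgroup, $H_n$ being commensurable to $H_m$ would give $G_n\cong G_m$ and hence $P_n=\lpc(G_n)=\lpc(G_m)=P_m$. Therefore the $H_n$ represent countably many commensurability classes of commensurated subgroups of $V$.

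The step I expect to be the main obstacle is this last passage, from the completions $G_n$ back to genuinely pairwise non-commensurable commensurated subgroups: one has to pin down the correspondence with commensurated subgroups precisely enough to know that $G_n$ is not merely \emph{some} completion attached to $H_n$ but the Schlichting completion itself, which is where the absence of non-trivial compact normal subgroups — provided by Proposition~\ref{prop:simple} together with $[V,V]=V$ — and the commensurability-invariance of the Schlichting completion enter. Both facts are standard, but the argument has to be assembled carefully.
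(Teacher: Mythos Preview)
Your argument is correct, and for the second assertion it is essentially what the paper does: produce completions $G_n=\AAut_{\mathscr{P}_n}(\tree_n)$ with pairwise distinct local prime contents and observe that $\lpc$ is a local invariant. One small slip: it is Theorem~\ref{thm:mainthm}, not Theorem~\ref{thmshifttoalmaut}, that realises the completion as a tree almost automorphism group; Theorem~\ref{thmshifttoalmaut} only identifies $V$ with the \emph{dense} subgroup $V_{\mathscr{C}}$ of colour-preserving almost automorphisms, while the ambient t.d.l.c.\ group is the group $\AAut_{\mathscr{P}}(\tree)$ of \emph{patterning} almost automorphisms.

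For the first assertion you take a genuinely different route. The paper avoids the Schlichting-completion machinery entirely: it extends the notion of local prime content to commensurability classes of subgroups (Definition preceding Proposition~\ref{prop:lpccontained}), and Proposition~\ref{prop:lpcisproduct} shows directly that $\lpc([U\cap V_{\mathscr{C}}])$ equals the set of prime divisors of $\prod_{F\in\mathscr{F}}|F|$. This is exactly the statement $\lpc([\varphi^{-1}(U)])=P$ in Theorem~\ref{thm:mainthm}. Since $\lpc$ is by construction an invariant of the commensurability class inside $V$, distinct $P_n$ immediately give distinct classes $[H_n]$, with no need to reconstruct $G_n$ from $H_n$. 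Your approach via Proposition~\ref{prop:simple}, trivial compact core, and the commensurability-invariance of the Schlichting completion is sound (and the steps you flag as delicate do go through), but it is heavier than necessary here; its advantage is that it uses only the classical $\lpc$ for t.d.l.c.\ groups rather than the paper's extension to commensurability classes.
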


It would be interesting to know whether there are any other commensurability classes of commensurated subgroups or any other completions of $V$ than what we find in the present work, especially whether there are any completions which cannot be written as tree almost automorphism groups.

\begin{remark}
	It is a classical fact that $V$ is finitely generated and simple. As a consequence it is not residually finite, hence it is not a linear group over $\mathbb{C}$.
	Therefore it follows from work by Boyko, Gefter and Kulagin \cite{bgk03}, Theorem 9, that every completion of $V$ has to be totally disconnected.
\end{remark}

\section{Preliminaries}

The following convention is used throughout the paper.

\paragraph*{Convention.} A graph is pair $\frakg=(\mathfrak{v},\mathfrak{e})$ together with two maps $o,t \colon \mathfrak{e} \to \mathfrak{v}$ called origin and terminus. In particular, all graphs are oriented.
All trees are in addition assumed to be rooted, which allows to talk about children, parents and descendants of vertices. The orientation of a tree is poining away from the root, i.e. for each edge $e$ of a tree $t(e)$ is a child of $o(e)$.

\subsection{Tree almost automorphism groups}
\label{ch:almaut}

Throughout this section $\tree=(\vertices,\edges)$ denotes an arbitrary locally finite tree with root $v_0$.

\subsubsection{Definition of almost automorphisms} \label{sectalmautdef}

\begin{definition}
	A finite subtree $T \subset \tree$ is called \index{complete} \emph{complete} if it contains the root $v_0$,
	and for each vertex $w$ it contains either all or none of the children of $w$.
\end{definition}

\begin{notation} \label{nottreev} Throughout the paper we will use the following.
	\begin{itemize}
		\item For a subtree $T \subset \tree$ we denote by \index{leaf} $\mathcal{L}T \subset \mathcal V$ the set of leaves of $T$.
		\item For every vertex $v \in \mathcal V$ we denote by $\tree_v$ the subtree of $\tree$ with root $v$ and
		whose vertices are all the descendants of $v$.
		\item For a finite complete subtree $T \subset \tree$ the difference $\tree \setminus T$ will always denote the subgraph $\bigsqcup_{v \in \mathcal{L}T} \tree_v$ of $\tree$.
		Hence $\tree \setminus T$ is a forest with $|\mathcal{L}T|$ many connected components.
	\end{itemize}
\end{notation}

\begin{definition}
	Let $T_1$ and $T_2$ be finite complete subtrees of $\tree$.
	An \index{honest almost automorphism} \emph{honest almost automorphism} of $\tree$ is a forest isomorphism $\varphi \colon \tree \setminus T_1 \to \tree \setminus T_2$. 
\end{definition}
%

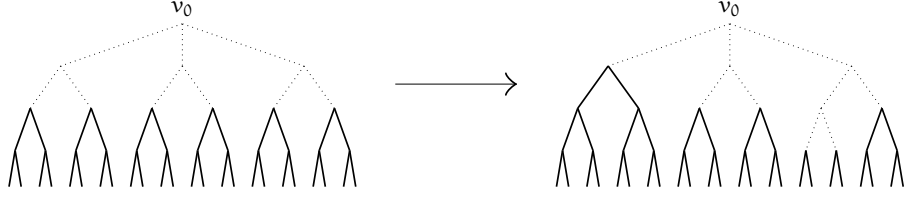
\begin{figure}[H]
	\centering
	\scalebox{0.8}{
		\begin{tikzpicture} [grow'=down]
		
		\tikzstyle{level 1}=[level distance=7mm,sibling distance=20mm]
		\tikzstyle{level 2}=[level distance=7mm,sibling distance=10mm]
		\tikzstyle{level 3}=[level distance=7mm,sibling distance=5mm]
		\tikzstyle{level 4}=[level distance=6mm,sibling distance=2mm]
		\tikzstyle{level 5}=[level distance=5mm,sibling distance=1mm]
		
		\node [inner sep=0pt,outer sep=-0.4pt] at (-4.5,0) {}
		child[dotted] foreach \i in {1,2,3}
		{child {child[solid,thick] {child  child  } 
				child[solid,thick] {child  child  } } 
			child {child[solid,thick] {child  child  } 
				child[solid,thick]  {child child  } } }; 
		\node [inner sep=0pt,outer sep=-0.4pt] at (4.5,0) {}
		child[dotted]
		{child {child[solid,thick] {child  child  } child[solid,thick] {child  child  } } 
			child {child {child[solid,thick]  child[solid,thick]  } child {child[solid,thick]  child[solid,thick]  } } }
		child[dotted]
		{child {child[solid,thick] {child  child  } child[solid,thick] {child  child  } } 
			child {child[solid,thick] {child  child  } child[solid,thick] {child  child  } } }
		child[dotted]
		{ child[solid,thick] {child {child  child  } child {child  child  } } 
			child[solid,thick] {child {child  child  } child {child  child  } } };
		
		\node [above] at (-4.5,0) {$v_0$};
		\node [above] at (4.5,0) {$v_0$};
		;
		
		\draw[decoration={markings,mark=at position 1 with {\arrow[line width=0.1mm,scale=3.5]{>}}},postaction={decorate}] (-1,-1)--(1,-1);    
		\end{tikzpicture}
	}
	\caption{The trees $T_1$ and $T_2$ are indicated with dotted lines.}
	\label{almautfigure}
\end{figure}
%

\paragraph*{Equivalence relation on honest almost automorphisms.} 
Let $T_1,T_2,T'_1,T'_2 \subset \tree$ be finite complete subtrees. Let
$\varphi \colon  \tree \setminus T_1 \to \tree \setminus T_2$ and $\psi \colon  \tree \setminus T'_1 \to \tree \setminus T'_2$ be honest almost automorphisms of $\tree$. 
We say that $\varphi$ and $\psi$ are \index{equivalent almost automorphisms} \emph{equivalent} if there exists a finite complete subtree $T \supset T_1 \cup T'_1$
such that $\varphi|_{\tree \setminus T} = \psi|_{\tree \setminus T}$.

\begin{definition}
	With this equivalence relation, an \index{almost automorphism} \emph{almost automorphism} of $\tree$ is the equivalence class of an honest almost automorphism.
\end{definition}

We will usually not distinguish between an honest almost automorphisms and its equivalence class,
but say it explicitly whenever we need to talk about an honest almost automorphism.

In proofs it will be convenient to work with a certain generating set for this equivalence relation. We need the following notion.

\paragraph*{Simple expansions.}
For finite complete subtrees $T \subset T' \subset \tree$ and a leaf $v$ of $T$, we say that $T'$ is obtained from $T$ by a \index{simple expansion} \emph{simple expansion} at $v$, or simply $T'$ is a simple expansion of $T$, if $T'$ is spanned by $T$ and all children of $v$. Note that any finite complete subtree of $\tree$ containing $T$ is obtained from $T$ by a sequence of simple expansions.
If in the definition of the equivalence relation on honest almost automorphisms above we require that $T'_1$ is obtained from $T_1$ by a simple expansion and set $T=T'_1$,
the resulting relation generates the equivalence relation. 

\begin{remark} \label{equivclassrmk}
	Let $T_1,T_2$ be finite complete subtrees of $\tree$ and let $\varphi \colon \tree \setminus T_1 \to \tree \setminus T_2$ be an honest almost automorphism.
	Then, for every finite complete subtree $T$ of $\tree$ containing $T_1$ there exists a unique finite complete subtree $T'$ of $\tree$ containing $T_2$ and a unique representative $\psi \colon \tree \setminus T \to \tree \setminus T'$ of $\varphi$.
	The analogous statement holds for $T \supset T_2$.
\end{remark}

If we pass to equivalence classes, the preceding remark allows us to define a product on the set of all almost automorphisms.

\paragraph*{Composition of two almost automorphisms.} \index{product of almost automorphisms} Let $T_1,\dots,T_4 \subset \tree$ be finite complete subtrees of $\tree$.
Let $\varphi \colon \tree \setminus T_1 \to \tree \setminus T_2$ and $\psi \colon  \tree \setminus T_3 \to \tree \setminus T_4$ be almost automorphisms.
By the previous remark we can choose a finite complete subtree $T \supset T_4 \cup T_1$ of $\tree$ and take representatives for $\psi$ and $\varphi$ with image respectively domain $\tree \setminus T$. These representatives we can compose as maps. The equivalence class of this composition is the product $\varphi \circ \psi$.

\begin{definition}
	With this product the set of almost automorphisms of $\tree$ is a group, called \emph{group of almost automorphisms} of $\tree$ and denoted $\operatorname{AAut}(\tree)$.
\end{definition}

The almost automorphism group was first defined by Neretin \cite{n92} for regular trees, and is then known by the name Neretin's group.

\begin{remark}
	There is an obvious homomorphism $\Aut(\tree) \to \AAut(\tree)$. It is injective if $\tree$ has no leaves. We will therefore consider automorphisms of $\tree$ as almost automorphisms without further notice.
\end{remark}
%
%

\subsection{The topological full group of a groupoid} \label{secttfggroupoid}

The topological full group of a groupoid, generalizing the longer known notion of the topological full group of a group action, was first defined and studied by Matui~\cite{m12}.
We refer to the preliminary section there for a more detailed introduction to it.
See also the survey article \cite{m16}.

\subsubsection{Topological groupoids}
A \index{groupoid} \emph{groupoid} is a small category such that every morphism is an isomorphism. 
Recall that \emph{small} means that the class of objects and the class of morphisms are sets.
A \index{topological groupoid} \emph{topological groupoid} is a groupoid $\mathcal{G}$ such that the set of objects and the set of morphisms are topological spaces and all structure maps (composition, inverse, identity, source and range) are continuous. We will always assume that they are t.d.l.c. and Hausdorff.
We denote by $\mathcal{G}^{(0)}$ the space of objects and by $\mathcal{G}^{(1)}$ the space of morphisms of $\mathcal{G}$.
Denote by $$s,r \colon \mathcal G^{(1)} \to \mathcal{G}^{(0)}$$ the source and range maps.
%

\begin{definition}
	A topological groupoid is called \index{\'etale} \emph{\'etale} if source and range
	are open maps and local homeomorphisms.
\end{definition}

%
%

\begin{definition}
	Let $Y \subset \mathcal{G}^{(0)}$ be a clopen subset. The \index{reduction (groupoid)} \emph{reduction of $\gpoid$ to $Y$} is the full subgroupoid of $\gpoid$ with object set $Y$, denoted $\gpoid|_Y$. Full means that the morphism set is $\{g \in \mathcal{G}^{(1)} \mid s(g) \in Y, r(g) \in Y\}$. Object and morphism sets are endowed with the subspace topology.
\end{definition}
%

\begin{definition}
	A \emph{bisection} of $\mathcal{G}$ is a clopen subset $U \subset \mathcal{G}^{(1)}$
	such that the restricted maps $s|_U \colon U \to \mathcal{G}^{(0)}$ and $r|_U \colon U \to \mathcal{G}^{(0)}$ are homeomorphisms.
\end{definition}

\begin{definition}
	The \index{topological full group (groupoid)} \emph{topological full group} of an \'etale groupoid $\mathcal G$ is 
	$$\mathcal{F}(\mathcal{G}) := \big\{ r \circ (s|_U)^{-1} \in \operatorname{Homeo}(\mathcal{G}^{(0)}) \bigm| U \subset \mathcal G^{(1)} \text{ bisection of } \mathcal{G} \big\}.$$
	We leave it to the reader to check that $\mathcal{F}(\mathcal{G})$ is indeed a subgroup of $\operatorname{Homeo}(\mathcal{G}^{(0)})$.
\end{definition}

\paragraph*{Groupoid Homology.} We will need groupoid homology because it gives isomorphisms between topological full groups, see Theorem \ref{thm:matsthm}. The homology theory for \'etale groupoids was intruduced by Crainic and Moerdijk \cite{cm00}.
For $n \geq 2$ let $$\gpoid^{(n)}:=\{(g_1,\dots,g_n) \in \gpoid^{(1)} \mid r(g_{i+1})=s(g_{i}) \}$$ be the $n$-tuples of composable morphisms.
Define $$d_i \colon \gpoid^{(n)} \to \gpoid^{(n-1)}$$ for $i=0,\dots,n$ by
\[
d_i(g_1,\dots,g_n)=\begin{cases}
(g_2,\dots,g_n) & i=0 \\
(g_1,\dots,g_ig_{i+1},\dots,g_n) & i=1,\dots,n-1 \\
(g_1,\dots,g_{n-1}) & i=n.
\end{cases}
\]
Let $C_c(\gpoid^{(n)},\mathbb{Z})$ be the set of compactly supported continuous functions $\gpoid^{(n)} \to \mathbb{Z}$. It is an abelian group with pointwise addition. For every $f \in C_c(\gpoid^{(n)},\mathbb{Z})$ and every \'etale map $h \colon  X \to Y$ between locally compact Hausdorff spaces we define $h_*(f)(y) := \sum_{h(x)=y} f(x)$.
We now have the chain complex
\[
0 \overset{\delta_0}{\longleftarrow} C_c(\gpoid^{(0)},\mathbb{Z}) \overset{\delta_1}{\longleftarrow} C_c(\gpoid^{(1)},\mathbb{Z})  \overset{\delta_2}{\longleftarrow} C_c(\gpoid^{(3)},\mathbb{Z}) \overset{\delta_3}{\longleftarrow} \dots
\]
with $\delta_1:=s_* - r_*$ and $\delta_n := \sum_{i=0}^{n}(-1)^i {d_i}_*$.
The homology groups are, as usual, defined by $H_n(\gpoid,\mathbb{Z})=\Ker(\delta_n)/\operatorname{Im}(\delta_{n+1})$.

\subsubsection{One-sided irreducible shifts of finite type} \label{chshifts}

We refer to \cite{m15}, Section 6, for a more detailed treatment 
of shifts of finite type and the topological full group associated to them.

\begin{definition}
	Let $\mathfrak{g}=(\mathfrak{v},\mathfrak{e})$ be finite graph.
	The \index{adjacency matrix} \emph{adjacency matrix} of $\mathfrak{g}$ is the matrix $M_\mathfrak{g} \in \mathbb{Z}^{\mathfrak{v} \times \mathfrak{v}}$ such that for all $v,w \in \mathfrak{v}$ the entries of the matrix are
	$$M_\mathfrak{g}(v,w)=\left|\{e \in E' \mid i(e)=v,t(e)=w\}\right|.$$
\end{definition}

\paragraph*{Assumptions on oriented graphs.}
Let $\mathfrak{g}=(\mathfrak{v},\mathfrak{e})$ be a finite graph and $M_\mathfrak{g}$ its adjacency matrix.
We will always impose two conditions on $\frakg$, i.e. on $M_\mathfrak{g}$.
\begin{enumerate}
	\item It must be irreducible, 
	i.e. for all $v,w \in \mathfrak{v}$ there exists an $n>0$ such that $M_\mathfrak{g}^n(v,w)\neq 0$.
	This is equivalent to saying that there exists a path of length $n$ from $v$ to $w$.
	We call such a graph \index{diconnected (oriented graph)} \emph{diconnected}.
	\item It must not be a permutation matrix, 
	which is equivalent to saying that $\mathfrak g$ is not a disjoint union of oriented cycles.
	We call such a graph \index{non-circular} \emph{non-circular}.
\end{enumerate}

\paragraph*{One-sided irreducible shifts of finite type.}
Let $$X_\mathfrak{g} = \big\{(e_k) \in \mathfrak{e}^{\mathbb N} \bigm| \forall k \in \mathbb{N} \colon i(e_{k+1}) = t(e_k) \big\}$$ be the set of all infinite oriented paths in $\mathfrak g$.
Note that $X_\mathfrak{g} \subset \mathfrak{e}^{\mathbb N}$ is closed. 
Moreover, if $\mathfrak{g}$ is diconnected and non-circular then $X_\mathfrak{g}$ is a Cantor space.
Define the map $\sigma \colon X_\mathfrak{g} \to X_\mathfrak{g}$ by $\sigma(e)_k = e_{k+1}$. It is a local homeomorphism. 
The pair $(X_\mathfrak{g},\sigma)$ is called the \index{one-sided shift} \emph{one-sided irreducible shift of finite type associated to $\mathfrak g$}.

\paragraph*{Associated groupoid.}
We associate to $(X_\mathfrak{g},\sigma)$ the following groupoid $\mathcal{G}_\mathfrak{g}$. 
The space of objects and morphisms are
\begin{align*}
\mathcal{G}_\mathfrak{g}^{(0)} &= X_\mathfrak{g} \\
\mathcal{G}_\mathfrak{g}^{(1)} &= \big\{(x,n-m,y) \in X_\mathfrak{g} \times \mathbb Z \times X_\mathfrak{g} \bigm| \sigma^n(x)=\sigma^m(y)\big\} \subset X_\mathfrak{g} \times \mathbb Z \times X_\mathfrak{g}.
\end{align*}
We endow $\mathcal{G}_\mathfrak{g}^{(1)}$ with the topology that is generated by all sets of the form $$\{(x,n-m,y) \mid x \in U,\, y \in V,\, \sigma^n(x)=\sigma^m(y)\}$$ with $U,V \subset X_\mathfrak{g}$ clopen.
The source and range maps, $s$ and $t$, are the projection on the last respectively first factor.
Two elements $(x,n-m,y)$ and $(y',m-l,z)$ are composable if and only if $y=y'$ and the product is $$(x,n-m,y) \cdot (y,m-k,z) = (x,n-k,z).$$
The unit space consists of all elements of the form $(x,0,x)$ and is homeomorphic to $X_\mathfrak{g}$ in an obvious way.
The inverse is given by $(x,n-m,y)^{-1} = (y,m-n,x)$.

\paragraph*{Homology.} The homology can be computed as follows
\[
	H_n(\gpoid_{\frakg},\mathbb{Z}) \cong \begin{cases}
	\Coker(id-M^t) & n=0 \\
	\Ker(id-M^t) & n=1 \\
	0 & n \geq 2,
	\end{cases}
\]
where $M^t$ denotes the transpose of $M$.
The isomorphism between $H_0(\gpoid_{\frakg},\mathbb{Z})$ and $\Coker(id-M^t)$ is induced by the map
$v \mapsto 1_{\{(e_i) \in X_\frakg \mid o(e_0)=v\}}$, see \cite{m15}, beginning of Section 6.2.

\begin{lemma}[\cite{m15} Lemma 5.3] \label{lem:homimage}
	For every $h \in H_0(\gpoid_\frakg,\mathbb{Z})$ there exists a non-empty clopen set $Y \subset X_\frakg$ such that $h=[1_Y]$, where $[1_Y]$ is the homology class of $1_Y$, the indicator function on $Y$.
\end{lemma}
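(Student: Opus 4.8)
The statement to prove is Lemma~\ref{lem:homimage}: every homology class $h \in H_0(\gpoid_\frakg,\mathbb{Z})$ is represented by an indicator function $1_Y$ for some non-empty clopen $Y \subset X_\frakg$. The plan is to start from an arbitrary compactly supported continuous function $f \colon X_\frakg \to \mathbb{Z}$ representing $h$ and to modify it, without changing its class in $H_0 = C_c(X_\frakg,\mathbb{Z})/\operatorname{Im}(\delta_1)$, until it becomes an indicator function. The boundary relation we are allowed to use is $\delta_1 = s_* - r_*$ applied to elements of $C_c(\gpoid_\frakg^{(1)},\mathbb{Z})$; concretely, if $U$ is a bisection with $s(U) = A$ and $r(U) = B$ (clopen subsets of $X_\frakg$), then $1_A$ and $1_B$ are homologous. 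More generally, using the shift map $\sigma$, which is an \'etale surjection, for any clopen $Z$ on which $\sigma$ is injective we get that $1_Z$ and $1_{\sigma(Z)}$ are homologous, since the graph of $\sigma|_Z$ is a bisection.

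First I would reduce to the case of a \emph{non-negative} function. Since $X_\frakg$ is a Cantor space, write $f = \sum_{i} c_i 1_{Z_i}$ as a finite integer combination of indicator functions of pairwise disjoint non-empty clopen sets $Z_i$, with $c_i \in \mathbb{Z}\setminus\{0\}$. For a set $Z_i$ with negative coefficient I want to ``flip its sign'': using non-circularity and diconnectedness of $\frakg$, I can find, inside $Z_i$ (after passing to a small enough cylinder and applying a power of $\sigma$ appropriately, or using that $X_\frakg$ has no isolated points together with irreducibility), two disjoint clopen subsets $Z_i', Z_i''$ each homologous to $1_{Z_i}$; then $1_{Z_i} \sim 1_{Z_i'} + 1_{Z_i''} - 1_{Z_i}$ up to boundaries, i.e. $2[1_{Z_i}] = [1_{Z_i}]$ is false in general, so instead the right move is: $[1_{Z_i}] = [1_{Z_i'}]$ and $Z_i'$ can be chosen disjoint from the (finitely many) other $Z_j$'s and from a spare piece, allowing one to trade a $-1_{Z_i}$ for a $+1_{W}$ on a fresh clopen set $W$ disjoint from everything else currently in play. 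Iterating, I arrive at a representative of $h$ of the form $f' = \sum_j 1_{W_j}$ with the $W_j$ clopen but possibly overlapping, or with multiplicities.

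Second, I would eliminate multiplicities. If a clopen set $W$ appears with coefficient $\geq 2$, split $W = W' \sqcup W''$ into two disjoint non-empty clopen halves (possible since $W$ has no isolated points), and replace $2\cdot 1_W = 1_{W'} + 1_{W''} + 1_{W'} + 1_{W''}$; then push $1_{W'}$ forward along an injective branch of $\sigma$ (or pull back along a section) to a clopen set disjoint from all other sets currently appearing and from $W'', W''$ — here diconnectedness guarantees there is always ``room'' in $X_\frakg$ to relocate a small clopen piece to a place disjoint from any prescribed finite family of clopen sets, because $X_\frakg$ is a perfect compact space and $\sigma$ mixes the cylinders. Repeating, I reduce every coefficient to $1$ and make the supporting clopen sets pairwise disjoint, so that $f'' = 1_Y$ with $Y = \bigsqcup W_j$. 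If $Y = \emptyset$ then $h = 0 = [1_{Y'} ] - [1_{Y'}]$... rather, to get a \emph{non-empty} representative even when $h = 0$, take any non-empty clopen $Z$ with $1_Z$ a boundary — e.g. pick a cylinder $Z$ and a disjoint cylinder $Z'$ with $1_Z \sim 1_{Z'}$ via $\sigma$, so $1_Z - 1_{Z'}$ is a boundary, but that is not quite $1_Z$ being a boundary; instead use that $H_0 = \Coker(\mathrm{id} - M^t)$ and choose $Z$ so that its class is $0$, e.g. using the explicit generators $1_{\{o(e_0)=v\}}$ and a relation coming from a vertex — this last small point needs the non-circularity hypothesis to produce a genuinely null but non-empty indicator.

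The main obstacle I anticipate is the ``relocation'' step: making precise, using only diconnectedness and non-circularity of $\frakg$, that any sufficiently small clopen subset of $X_\frakg$ can be moved by a bisection (a partial homeomorphism built from $\sigma$ and $\sigma^{-1}$ on branches) onto a clopen set disjoint from an arbitrary prescribed finite family — equivalently, that the groupoid $\gpoid_\frakg$ acts ``richly enough'' on $X_\frakg$. This should follow from irreducibility (any cylinder $[e_0 \cdots e_{n-1}]$ can be mapped onto any cylinder $[f_0 \cdots f_{m-1}]$ provided there is a path in $\frakg$ from $t(e_{n-1})$ to $o(f_0)$, after enlarging one cylinder) together with non-circularity (guaranteeing cylinders can be made arbitrarily small and that $\sigma$ is genuinely non-injective somewhere, so distinct branches exist to separate pieces). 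Once this geometric lemma is in hand, all the bookkeeping above is routine, and the result follows.
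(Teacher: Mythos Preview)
The paper does not prove this lemma; it is quoted from Matui \cite{m15}, Lemma~5.3, and used as a black box. So there is no ``paper's proof'' to compare against. That said, let me comment on your outline.

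Your overall strategy --- represent $h$ by a finite integer combination of indicators of disjoint clopens, then use bisections (equivalently, partial shifts) to massage the representative until only a single indicator remains --- is the right shape, and is essentially how Matui's argument proceeds. You also correctly locate the key geometric input: inside any non-empty clopen $Z \subset X_\frakg$ one can find two disjoint non-empty clopen subsets $Z_1, Z_2 \subset Z$ each related to $Z$ by a bisection (this is Matui's \emph{purely infinite} property, and it follows from irreducibility plus non-circularity of $\frakg$).

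Where your plan has a genuine gap is the sign-flip step. You produce $Z_1, Z_2$ as above and then discard the idea, writing ``$2[1_Z]=[1_Z]$ is false in general'', and propose instead to ``relocate $Z_i$ to a fresh disjoint spot'' to trade $-1_{Z_i}$ for $+1_W$. But relocating via a bisection preserves the homology class, hence also its sign; moving $-1_Z$ around never makes it positive. The correct use of the two disjoint copies is the computation you abandoned: writing $Z = Z_1 \sqcup Z_2 \sqcup Z_3$ with $Z_3 = Z \setminus (Z_1 \cup Z_2)$ (non-empty after shrinking $Z_1$ slightly, again by pure infiniteness), one gets
\[
[1_Z] \;=\; [1_{Z_1}] + [1_{Z_2}] + [1_{Z_3}] \;=\; 2[1_Z] + [1_{Z_3}],
\]
so that $-[1_Z] = [1_{Z_3}]$. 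This single identity is the whole trick. Once you have it, the negative-coefficient reduction is immediate, the $h=0$ case falls out (take $Y = Z \setminus Z_1$, non-empty with $[1_Y] = [1_Z]-[1_{Z_1}]=0$), and your multiplicity reduction simplifies as well: $2[1_W] = [1_{W_1}]+[1_{W_2}] = [1_{W_1 \sqcup W_2}]$, with no need to find room in the complement.
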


The following theorem of Matui summarizes some properties of these topological full groups.

\begin{theorem}[\cite{m15}, Section 6] \label{shiftthm}
	Let $\mathfrak{g}$ be a finite oriented diconnected and non-circular graph and let
	$M_{\mathfrak{g}}$ be its adjacency matrix. Let $Y \subset X_\frakg$ a clopen subset.
	Then, the topological full group $\mathcal{F}(\mathcal G_\mathfrak{g}|_Y)$ is finitely presented 
	(more precisely, it is of type $F_\infty$).
	Moreover, every non-trivial subgroup of $\mathcal{F}(\mathcal G_\mathfrak{g}|_Y)$ normalized by the commutator subgroup $D(\mathcal{F}(\mathcal G_\mathfrak{g}|_Y))$ contains $D(\mathcal{F}(\mathcal G_\mathfrak{g}|_Y))$.
	In particular $D(\mathcal{F}(\mathcal G_\mathfrak{g}|_Y))$ is simple.
	Its abelianization is independent of $Y$ and isomorphic to
	\[
	\mathcal{F}(\mathcal G_\mathfrak{g}|_Y)/D(\mathcal{F}(\mathcal G_\mathfrak{g}|_Y))  \cong  \left( H_0(\gpoid_\frakg,\mathbb{Z}) \otimes_{\mathbb{Z}} \mathbb{Z}/2\mathbb{Z} \right) \oplus H_1(\gpoid_\frakg,\mathbb{Z}).\]
\end{theorem}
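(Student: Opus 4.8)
The plan is to prove the three assertions --- type $F_\infty$ (hence finite presentability), the normal-subgroup dichotomy together with simplicity of the commutator subgroup, and the computation of the abelianisation --- largely independently, since each rests on a different set of tools. Abbreviate $\Gamma := \mathcal{F}(\mathcal G_\frakg|_Y)$ and $D := D(\Gamma)$. The structural input that makes everything work is that diconnectedness of $\frakg$ forces the shift $(X_\frakg,\sigma)$ to be minimal, so every reduction $\mathcal G_\frakg|_Y$ is a minimal ample groupoid, while non-circularity forces it to be ``purely infinite'': each non-empty clopen subset of $X_\frakg$ is $\mathcal G_\frakg$-equivalent to a proper clopen subset of itself. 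Minimality together with pure infiniteness are exactly the hypotheses under which one has strong control over $\Gamma$.

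For the simplicity statement I would argue as follows. Let $1 \neq N \leq \Gamma$ be normalised by $D$, pick $1 \neq g \in N$, and choose a clopen $U \subseteq X_\frakg$ with $gU \cap U = \emptyset$, which is possible since $g$ is a non-trivial homeomorphism of a Cantor set. For $h \in D$ supported in $U$ the commutator $hgh^{-1}g^{-1}$ lies in $N$, and using $gU \cap U = \emptyset$ one checks that this element restricts, on disjoint pieces of its support, to $h$ and to a conjugate of $h^{-1}$; feeding this into a fragmentation lemma --- $D$ is generated by elements of arbitrarily small support --- and using minimality and pure infiniteness to move the supports of such generators into translates of $U$, one deduces $D \subseteq N$. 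Since $D$ is non-trivial --- it surjects, via the index map below, onto the $H_1$-summand of the abelianisation, and in any case contains non-trivial commutators of transposition-type elements --- and has trivial centre, taking $N = D$ shows that $D$ is simple.

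For type $F_\infty$ I would produce a contractible complex on which $\Gamma$ acts with good finiteness data. Cylinder sets of $X_\frakg$ are indexed by finite admissible words in $\frakg$, equivalently (after the identification with almost automorphisms suggested by Section~\ref{ch:almaut}) by finite complete subtrees, so one can let $X$ be the simplicial complex whose vertices are the finite clopen partitions of $Y$ into cylinder sets, with a simplex for each chain of such partitions ordered by refinement. Then $\Gamma$ acts on $X$, the action is cocompact on each skeleton, and the stabiliser of a partition is, up to finite index, a direct product of groups of the same form $\mathcal{F}(\mathcal G_{\frakg'}|_{Y'})$ of strictly smaller complexity, hence of type $F_\infty$ by induction. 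Since any two partitions admit a common refinement, $X$ is directed and contractible; one then runs a discrete Morse-theoretic (Bestvina--Brady style) filtration of $X$, shows that the relevant descending links are highly connected, and invokes Brown's finiteness criterion to conclude type $F_\infty$, and in particular finite presentability.

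Finally, for the abelianisation I would build two homomorphisms and identify their joint kernel with $D$. The \emph{index map} $I \colon \Gamma \to H_1(\gpoid_\frakg,\mathbb{Z})$ sends $r \circ (s|_U)^{-1}$ to the homology class of the integer-valued $1$-cochain naturally carried by the bisection $U$; the cocycle condition and additivity under composition follow from the groupoid relations. The \emph{sign map} $\sgn \colon \Gamma \to H_0(\gpoid_\frakg,\mathbb{Z}) \otimes_{\mathbb{Z}} \mathbb{Z}/2\mathbb{Z}$ assigns to an element, presented by a pair of complete subtrees, the mod-$2$ sum of the $H_0$-classes of those cylinder sets on which the induced finite permutation is odd. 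One must check that these are well defined, independent of the presenting partitions and of $Y$ --- here Lemma~\ref{lem:homimage} realises any $H_0$-class by an indicator function and lets one transport classes between reductions --- that they are homomorphisms, and that $(\sgn, I)$ is surjective, again using Lemma~\ref{lem:homimage} together with the possibility of realising a prescribed clopen set as the support of a transposition or of a ``rotation'' element. The inclusion $D \subseteq \Ker(\sgn, I)$ is trivial since the target is abelian. The reverse inclusion is the crux, and I expect it to be the main obstacle: an element annihilated by both invariants must be written as a product of commutators. The argument should proceed by fragmentation, reducing to an element supported on a single small cylinder set, where the vanishing of the index (no net cyclic shift) and of the sign (the permutation part is even) are exactly what is needed to exhibit it as a product of commutators, with pure infiniteness supplying the auxiliary cylinder sets in which those commutators are formed. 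By comparison, the connectivity estimates needed for the $F_\infty$ part, though lengthy, are of a comparatively routine nature.
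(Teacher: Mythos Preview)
The paper does not contain a proof of this theorem at all: it is quoted verbatim as a result of Matui \cite{m15}, Section~6, and used as a black box. So there is no ``paper's own proof'' to compare against; what you have written is an outline of how one might reprove Matui's result.

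That said, two of your three sketches are sound in outline. The commutator-trick argument for simplicity is the standard Higman--Epstein style proof and is what Matui does; your parenthetical claim that $D$ ``surjects, via the index map, onto the $H_1$-summand of the abelianisation'' is a slip (by definition $D$ lies in the kernel of abelianisation), but your fallback reason that $D$ contains non-trivial transposition commutators is fine. The abelianisation computation via an index map into $H_1$ and a sign map into $H_0\otimes\mathbb{Z}/2\mathbb{Z}$ is exactly Matui's approach, and you correctly identify the hard inclusion $\Ker(\sgn,I)\subseteq D$ as the crux.

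The $F_\infty$ sketch, however, has a genuine circularity. You claim that the stabiliser of a clopen partition $\{C_1,\dots,C_n\}$ is, up to finite index, $\prod_i \mathcal{F}(\mathcal G_{\frakg}|_{C_i})$ with each factor ``of strictly smaller complexity'', and you want to induct. But for a minimal purely infinite groupoid such as $\mathcal G_\frakg$, every reduction $\mathcal G_\frakg|_{C_i}$ is Kakutani-equivalent --- indeed isomorphic as an \'etale groupoid --- to $\mathcal G_\frakg|_Y$ itself, so the factors are copies of the very group whose finiteness you are trying to establish; there is no induction parameter that decreases. Matui's actual argument (and the related Stein--Brown style arguments for Higman--Thompson groups) uses a complex in which vertex stabilisers are \emph{finite} --- essentially products of symmetric groups permuting the pieces of a partition --- rather than infinite groups of the same type. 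The high connectivity of descending links then does the work, and Brown's criterion applies directly with finite stabilisers. Your outline would need to be reworked along those lines for the $F_\infty$ part to go through.
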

%
%

Our results strongly rely on the following theorem by Matsumoto. The formulation in which we need it is given in \cite{m15}, Theorem 6.2.

\begin{theorem}[\cite{mats10}, Theorem 1.1; \cite{mats13}, Theorem 1.1] \label{thm:matsthm}
  Let $\frakg_1$ and $\frakg_2$ be diconnected and non-circular finite oriented graphs and let $M_1$ respectively $M_2$ be their adjacency matrices. Denote by $\gpoid_{\frakg_1}$ and $\gpoid_{\frakg_2}$ the associated groupoids.
  Suppose that there exists an isomorphism $H_0(\gpoid_{\frakg_1}) \to H_0(\gpoid_{\frakg_2})$ with $[1_{Y_1}] \mapsto [1_{Y_2}]$ and assume moreover that $\det(id-M_1^t)=\det(id-M_2^t)$. Then $\gpoid_{\frakg_1}|_{Y_1}$ is isomorphic as \'etale groupoid to $\gpoid_{\frakg_2}|_{Y_2}$. In particular, their associated topological full groups are isomorphic.
\end{theorem}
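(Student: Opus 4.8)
This is a deep theorem of Matsumoto (with a refinement due to Matsumoto and Matui), so I will only sketch the route I would follow. The plan is to translate a groupoid isomorphism into an isomorphism of Cuntz--Krieger algebras equipped with their canonical diagonal Cartan subalgebra, to use the Kirchberg--Phillips classification to produce a bare $*$-isomorphism, and then to use the determinant hypothesis to upgrade it to one that respects the diagonal.

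First I would appeal to Renault's reconstruction theorem, in the form for effective ample groupoids: an isomorphism of \'etale groupoids $\gpoid_{\frakg_1}|_{Y_1}\cong\gpoid_{\frakg_2}|_{Y_2}$ is the same data as an isomorphism of the pairs $\bigl(C^*_r(\gpoid_{\frakg_1}|_{Y_1}),C_0(Y_1)\bigr)\cong\bigl(C^*_r(\gpoid_{\frakg_2}|_{Y_2}),C_0(Y_2)\bigr)$, where the second entries are the canonical diagonal subalgebras. Next I would remove the reductions. Since $\frakg$ is diconnected, the shift $(X_\frakg,\sigma)$ is minimal, so every non-empty clopen $Y\subset X_\frakg$ meets every orbit and $1_Y$ is a full projection in the Cuntz--Krieger algebra $\mathcal{O}_{M}=C^*_r(\gpoid_\frakg)$, which is unital, simple, purely infinite, nuclear and satisfies the UCT (using that $M$ is irreducible and not a permutation matrix). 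Hence $C^*_r(\gpoid_\frakg|_Y)=1_Y\mathcal{O}_{M}1_Y$ is a full corner, again a unital Kirchberg algebra in the UCT class, and fullness in a simple algebra gives natural isomorphisms $K_i(1_Y\mathcal{O}_{M}1_Y)\cong K_i(\mathcal{O}_{M})$ carrying the class of the unit to $[1_Y]\in K_0(\mathcal{O}_{M})$. Matui's homology computation identifies $K_0(\mathcal{O}_{M})\cong H_0(\gpoid_\frakg)\cong\Coker(id-M^t)$ and $K_1(\mathcal{O}_{M})\cong H_1(\gpoid_\frakg)\cong\Ker(id-M^t)$, compatibly with the classes $[1_Y]$. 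Note that an isomorphism $H_0(\gpoid_{\frakg_1})\cong H_0(\gpoid_{\frakg_2})$ already forces the free ranks of these groups to coincide, and that rank equals $\operatorname{rank}\Ker(id-M^t)$, so $H_1(\gpoid_{\frakg_1})\cong H_1(\gpoid_{\frakg_2})$ comes for free and no separate hypothesis on $H_1$ is needed. Feeding the matching $(K_0,[1],K_1)$ supplied by the hypothesis into Kirchberg--Phillips then produces \emph{some} $*$-isomorphism $1_{Y_1}\mathcal{O}_{M_1}1_{Y_1}\cong 1_{Y_2}\mathcal{O}_{M_2}1_{Y_2}$.

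The main obstacle --- and the only place where the determinant hypothesis enters --- is to replace this bare $*$-isomorphism by a diagonal-preserving one, equivalently (undoing the first step) by a genuine groupoid isomorphism. Here $\det(id-M^t)$ plays, in the one-sided continuous-orbit-equivalence setting, the role that the sign of the determinant plays in the Bowen--Franks classification of flow equivalence of two-sided shifts. I would argue that the graph moves which leave both $\bigl(\Coker(id-M^t),[1_Y]\bigr)$ and $\det(id-M^t)$ invariant --- in- and out-splittings and the other strong-shift-equivalence type moves, adapted so as to preserve the \emph{one-sided} structure --- are each realized by an explicit diagonal-preserving isomorphism of the corresponding groupoid $C^*$-algebras; that under the determinant hypothesis such moves connect $(\frakg_1,Y_1)$ to $(\frakg_2,Y_2)$ while realizing the prescribed isomorphism on $H_0$; and that composing these explicit isomorphisms with the Kirchberg--Phillips one and Matui's homological bookkeeping yields the desired diagonal-preserving isomorphism. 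The last sentence of the theorem is then formal: a groupoid isomorphism $\gpoid_{\frakg_1}|_{Y_1}\cong\gpoid_{\frakg_2}|_{Y_2}$ restricts to a homeomorphism of the unit spaces that carries bisections to bisections, hence conjugates $\mathcal{F}(\gpoid_{\frakg_1}|_{Y_1})$ onto $\mathcal{F}(\gpoid_{\frakg_2}|_{Y_2})$ inside the respective homeomorphism groups.
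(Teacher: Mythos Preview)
The paper does not prove this theorem at all: it is quoted from Matsumoto (via Matui's reformulation in \cite{m15}, Theorem~6.2) and used as a black box. So there is no ``paper's own proof'' to compare against; your sketch is an attempt to outline the external argument rather than something the present paper supplies.

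As an outline of Matsumoto's route your sketch has the right architecture --- Renault reconstruction to reduce to a diagonal-preserving $*$-isomorphism of Cuntz--Krieger corners, $K$-theory/Bowen--Franks identifications, and then an upgrade step governed by the determinant --- and your observation that an $H_0$-isomorphism already pins down $H_1$ (since the free rank of $\Coker(id-M^t)$ equals $\dim\Ker(id-M^t)$) is correct. Two comments. First, a small imprecision: the one-sided SFT $(X_\frakg,\sigma)$ is not a minimal dynamical system (it has periodic points); what you need and what is true is that the \emph{groupoid} $\gpoid_\frakg$ is minimal, so that $1_Y$ is a full projection in $\mathcal{O}_M$. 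Second, and more substantively, the whole content of Matsumoto's theorem sits in your final paragraph, which you only gesture at: producing a sequence of one-sided moves between $(\frakg_1,Y_1)$ and $(\frakg_2,Y_2)$ that (i) preserves the pair $(\Coker(id-M^t),[1_Y])$ \emph{and} $\det(id-M^t)$, and (ii) is realised by explicit continuous orbit equivalences (equivalently, diagonal-preserving isomorphisms), is precisely the hard combinatorial/$C^*$-algebraic work carried out in \cite{mats10,mats13}. Kirchberg--Phillips by itself gives only a bare $*$-isomorphism, which need not respect the diagonal, so invoking it does not shortcut that work; indeed Matsumoto's argument proceeds more directly through continuous orbit equivalence and symbolic-dynamical moves rather than by ``upgrading'' a classification-theoretic isomorphism. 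So your sketch correctly locates the crux but does not resolve it, which is fine given that the present paper treats the result as input.
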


As homology in degree $0$ is given by the cokernel, the isomorphism already implies $\det(id-M_1^t)|=|\det(id-M_2^t)|$. It is not known whether the sign of the determinants actually matter.
%
%
%
%

\section{A connection between shifts and almost automorphisms}
\label{ch:shiftalmaut}

In this section we want to show how every topological full group associated to a one-sided shift can be written as a group of tree almost automorphisms. 

\subsection{Label-preserving almost automorphisms}

Now we introduce colourings on trees respecting their symmetries.

\begin{definition} \label{def:patterning}
	Let $\tree=(\vertices,\edges)$ be a rooted tree with root $v_0$.
	For every vetex $v \in \vertices$ denote by $\operatorname{ch}(v) \subset \vertices$ the set of children of $v$.
	A \emph{colouring structure} of $\tree$ is a tuple $\mathscr{C}=(L,\ell,\mathcal{D},\ccol)$ consisting of the following data.
	\begin{itemize}
		\item $L$ is a finite set, called \emph{set of labels}.
		\item $\ell$ is a \emph{labelling} of $\tree$ with label set $L$, that is a map $\ell \colon \vertices \setminus\{v_0\} \to L$ with the following two properties. First, for every two vertices $v,w \in \vertices$ with $\ell(v)=\ell(w)$ there exists a label-preserving bijection $\alpha \colon \operatorname{ch}(v) \to \operatorname{ch}(w)$ such that for every $v' \in \operatorname{ch}(v)$ we have $\ell(\alpha(v'))=\ell(v')$. Second, for every $l \in L$ the set $\ell^{-1}(l)$ is not contained in any proper subtree of $\tree$. 
		\item $\mathcal{D}=\{D_{l,l'} \mid l,l' \in L \}$ is a set of finite disjoint sets, called \emph{set of colours} where for every $l \in L$ the set $D_{l,l'}$ has the cardinality as the set of children with label $l'$ of every vertex with label $l$.
		\item $\ccol$ is a map $\ccol \colon \edges \setminus o^{-1}(v_0) \to \bigsqcup_{l,l' \in L} D_{l,l'}$, called \emph{legal child colouring}, such that for every $v \in \vertices \setminus \{v_0\}$ with $\ell(v)=l$ the restriction $\ccol|_{o^{-1}(v) \cap \ell^{-1}(l')}$ is a bijection $\{e \in \edges \mid o(e)=v, \, \ell(t(v)) =l' \} \to D_{l,l'}$.
	\end{itemize}
    A \emph{patterning} $\mathscr{P}=(L,\ell,\mathcal{D},\ccol,\mathscr{F})$ is a colouring structure $(L,\ell,\mathcal{D},\ccol)$ together with a set of symmetric groups $\mathscr{F}=\{F_l \leq \Sym(\bigsqcup_{l' \in L} D_{l,l'}) \mid l \in L\}$, called the \emph{pattern}, that preserve the partition $D_l := \bigsqcup_{l' \in L} D_{l,l'}$.
    The patterning is called \emph{trivial} if every element of $\mathscr{F}$ is the trivial group.
%
\end{definition}

Note that not every tree admits a colouring structure.
%
%

For trees together with a patterning we now define certain groups of tree almost automorphisms.
The idea for this definition goes back to Burger and Mozes \cite{bm00a} and has since then be used to define many ``locally prescribed" groups.

\begin{definition}\label{defvell}
	Let $\tree$ be a tree with a patterning as in Definition \ref{def:patterning}.
	Let $T_1,T_2$ be complete finite subtrees of $\tree$ and let $\varphi \colon \tree \setminus T_1 \to \tree \setminus T_2$ be an honest almost automorphism of $\tree$.
    The honest almost automorphism $\varphi$ is called \emph{child colour preserving} if and only if for every edge $e \in \edges \setminus o^{-1}(v_0)$ holds $\ccol(e)=\ccol(\varphi(e))$.
	We denote the set of almost automorphisms admitting a child colour preserving representative by $V_{\mathscr{C}}$.
	Let $v$ be a vertex of $\tree \setminus T_1$. The \emph{local permutation} of $\varphi$ at $v$ is the bijection $\prm_{\varphi,v} := \ccol \circ \varphi \circ \ccol^{-1}|_{D_{\ell(v)}} \colon D_{\ell(v)} \to D_{\ell(\varphi(v))}$.
    The honest almost automorphism $\varphi$ is called \emph{$\mathscr{P}$-patterning} if and only if
	\begin{itemize}
		\item for every $v \in \mathcal{L}T_1$ holds $\ell(v)=\ell(\varphi(v))$, and
		\item for every vertex $v$ of $\tree \setminus T_1$ holds $\prm_{\varphi,v} \in F_{\ell(v)}$.
	\end{itemize}
    Note that this implies that for every vertex $v$ of $\tree \setminus T_1$ holds $\ell(v)=\ell(\varphi(v))$.
	We denote the set of all almost automorphisms of $\tree$ admitting a patterning representative by $\AAut_{\mathscr{P}}(\tree)$.
	In addition we write $\Aut_{\mathscr{P}}(\tree):= \AAut_{\mathscr{P}}(\tree) \cap \Aut(\tree)$ and for every finite subtree $T$ of $\tree$ we denote $\Fix_{\mathscr{P}}(T):= \AAut_{\mathscr{P}}(\tree) \cap \Fix(T)$, where $\Fix(T)$ is the group of all tree automorphisms fixing $T$ pointwise.
\end{definition}

Note that $\AAut_{\mathscr{P}}(\tree)$, $\Aut_{\mathscr{P}}(\tree)$ and $V_{\mathscr{C}}$ are, up to conjugation, independent of the choice of $\ccol$.

\begin{remark}
	That $\AAut_{\mathscr{P}}(\tree)$ is a subgroup of $\AAut(\tree)$ is due to the following. For all $\mathscr{P}$-patterning almost automorphisms $\varphi \colon \tree \setminus T_1 \to T_2$ and $\psi \colon \tree \setminus T_3 \to \tree \setminus T_2$ holds $\prm_{\varphi \circ \psi,v} = \prm_{\varphi,\psi(v)} \circ \prm_{\psi,v}$ for all vertices $v$ of $\tree \setminus T_3$.
\end{remark}
Now we can give a definition of the Higman-Thompson groups.

\begin{definition} \label{def:htgroups}
	Let $\tree$ be such that the root has valency $k$ and all other vertices have valency $d+1$. Let $\ell$ be a constant map. Then $V_{\mathscr{C}}$ is called the \emph{Higman-Thompson group}~$V_{d,k}$.
\end{definition}

\begin{example} \label{ex:aautd}
	If $\tree$ is such that the root has valency $k$ and all other vertices have valency $d+1$, and if $\ell$ is a constant map, then $\AAut_{\mathscr{P}}(\tree)$ is a group introduced by Caprace and De Medts in \cite{cm11} and studied by other authors. Le Boudec \cite{lb17} showed this group is compactly presented, a result which was generalized by Sauer and Thuman \cite{st17}.
\end{example}

\begin{remark} \label{rem:constrpattel}
	An element $\varphi \colon \tree \setminus T_1 \to \tree \setminus T_2$ in $\AAut(\tree)$ it is uniquely determined by the following data:
	\begin{itemize}
		\item the bijection $\varphi|_{\mathcal{L}T_1} \colon \mathcal{L}T_1 \to \mathcal{L}T_2$
		\item the element $\prm_{\varphi,v}$ for every vertex $v$ of $\tree \setminus T_1$.
	\end{itemize}
    Moreover $\varphi$ is $\mathscr{P}$-patterning if and only if for all but finitely many vertices $v$ holds $\prm_{\varphi,v} \in F_{\ell(v)}$.
\end{remark}

The following lemma is known for the groups in Example \ref{ex:aautd}, see \cite{lb17} Section 4. Since its proof is standard, we will provide only an outline.

\begin{lemma} \label{lem:basicpropaautp}
	The set $\AAut_{\mathscr{P}}(\tree)$ is a subgroup of $\AAut(\tree)$. The set $\Aut_{\mathscr{P}}(\tree)$ is a closed subgroup of $\Aut(\tree)$, which is discrete if any only if the pattern is trivial. There exists a unique group topology on $\AAut_{\mathscr{P}}(\tree)$ such that $\Aut_{\mathscr{P}}(\tree)$ is an open subgroup. With this topology $V_{\mathscr{C}}$ is dense in $\AAut_{\mathscr{P}}(\tree)$.				
\end{lemma}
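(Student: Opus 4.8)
The plan is to treat the four assertions in turn, since each is a standard verification for ``locally prescribed'' almost automorphism groups and only the last requires a genuine approximation argument.

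First I would show that $\AAut_{\mathscr P}(\tree)$ is a subgroup. Closure under composition is already recorded in the remark preceding the lemma: the cocycle identity $\prm_{\varphi\circ\psi,v}=\prm_{\varphi,\psi(v)}\circ\prm_{\psi,v}$ together with the fact that each $F_l$ is a group shows that a composite of two $\mathscr P$-patterning honest representatives is again $\mathscr P$-patterning, after passing (via Remark \ref{equivclassrmk}) to representatives with matching domain and codomain. For inverses, if $\varphi\colon\tree\setminus T_1\to\tree\setminus T_2$ is $\mathscr P$-patterning then $\prm_{\varphi^{-1},w}=(\prm_{\varphi,\varphi^{-1}(w)})^{-1}\in F_{\ell(w)}$, using that $\ell(\varphi(v))=\ell(v)$ and that $F_l$ is closed under inversion; hence $\varphi^{-1}$ is $\mathscr P$-patterning. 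The identity is patterning because the trivial permutation lies in every $F_l$. One should also remark that the defining conditions are invariant under the equivalence relation (enough to check across a simple expansion, using the first item of the definition that $\prm_{\varphi,v}\in F_{\ell(v)}$ is preserved), so the set of $\mathscr P$-patterning \emph{almost} automorphisms is well defined.

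Next, $\Aut_{\mathscr P}(\tree)=\AAut_{\mathscr P}(\tree)\cap\Aut(\tree)$. By Remark \ref{rem:constrpattel} an automorphism is $\mathscr P$-patterning iff $\prm_{g,v}\in F_{\ell(v)}$ for \emph{all} vertices $v$ (there is no finite subtree to throw away). Since $\prm_{g,v}$ depends only on the action of $g$ on the edges emanating from $v$, the condition ``$\prm_{g,v}\in F_{\ell(v)}$'' is clopen in $\Aut(\tree)$ with its permutation topology, and $\Aut_{\mathscr P}(\tree)$ is an intersection of such clopen conditions, hence closed. It is a group by the same cocycle computation as above. For discreteness: $\Fix_{\mathscr P}(T)$ for $T$ ranging over finite complete subtrees form a neighbourhood basis of the identity in $\Aut_{\mathscr P}(\tree)$; if some $F_l$ is non-trivial, then because $\ell^{-1}(l)$ is not contained in any proper subtree (second property of a labelling) every such $T$ has a leaf, or a vertex below it, with label $l$, so $\Fix_{\mathscr P}(T)$ contains a non-trivial element acting by an element of $F_l$ at that vertex; conversely if all $F_l$ are trivial then $\Fix_{\mathscr P}(T)$ is trivial for $T$ complete subtree large enough, giving discreteness.

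For the topology on $\AAut_{\mathscr P}(\tree)$: I would invoke the standard lemma that if a group $G$ has a subgroup $U$ carrying a group topology such that for every $g\in G$ the set $gUg^{-1}\cap U$ is open in $U$ and conjugation $u\mapsto gug^{-1}$ is a homeomorphism of this set onto $g^{-1}Ug\cap U$, then there is a unique group topology on $G$ making $U$ an open subgroup. Here $U=\Aut_{\mathscr P}(\tree)$; for $g=\varphi\colon\tree\setminus T_1\to\tree\setminus T_2$ a patterning representative, $\varphi\,\Fix_{\mathscr P}(T)\,\varphi^{-1}=\Fix_{\mathscr P}(T')$ for $T'$ the image subtree, once $T\supset T_1$ is complete and large enough, and these are open in $\Aut_{\mathscr P}(\tree)$, so the hypotheses hold. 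Uniqueness is automatic.

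Finally, density of $V_{\mathscr C}$: $V_{\mathscr C}$ consists of the child-colour-preserving almost automorphisms, i.e. those with $\prm_{\varphi,v}=\mathrm{id}$ for all $v$ in the complement of $T_1$. Since $\Aut_{\mathscr P}(\tree)$ is open, density amounts to showing that every coset $g\,\Aut_{\mathscr P}(\tree)$, equivalently every basic open set $g\,\Fix_{\mathscr P}(T)$, meets $V_{\mathscr C}$. Given $g=\varphi\colon\tree\setminus T_1\to\tree\setminus T_2$ patterning and $T\supset T_1$ a complete finite subtree, by Remark \ref{equivclassrmk} take the representative of $\varphi$ on $\tree\setminus T$, call it $\varphi'\colon\tree\setminus T\to\tree\setminus T''$; now define $\psi\colon\tree\setminus T\to\tree\setminus T''$ to have the same leaf bijection $\mathcal LT\to\mathcal LT''$ as $\varphi'$ but with $\prm_{\psi,v}=\mathrm{id}$ for every vertex $v$ of $\tree\setminus T$ — this is well-defined and child-colour-preserving by Definition \ref{defvell}, using that labels of corresponding leaves agree so the colour sets $D_{\ell(v)}$ match, hence $\psi\in V_{\mathscr C}$. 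Then $\varphi^{-1}\psi$ fixes $\mathcal LT$ and has trivial local permutations off $T$, so it lies in $\Fix_{\mathscr P}(T)$, giving $\psi\in\varphi\,\Fix_{\mathscr P}(T)\cap V_{\mathscr C}$.

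The main obstacle, such as it is, is the bookkeeping around the equivalence relation — making sure every assertion (the patterning conditions, child-colour-preservation, the conjugation formula for the group topology) is checked on honest representatives chosen with compatible domains and codomains via Remark \ref{equivclassrmk}, and that the conclusions descend to equivalence classes; the conjugation/neighbourhood-basis identities for $\Fix_{\mathscr P}(T)$ are where one must be careful that the relevant subtree is complete and large enough. Everything else is routine, which is why only an outline is given.
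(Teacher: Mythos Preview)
Your proposal follows essentially the same route as the paper's proof: the cocycle identity for the subgroup claim, the Bourbaki-type criterion via $\varphi\,\Fix_{\mathscr P}(T)\,\varphi^{-1}=\Fix_{\mathscr P}(T')$ for the topology, and the density argument by replacing all local permutations by the identity while keeping the leaf bijection. Your treatment is in fact more detailed in places (closedness, well-definedness under the equivalence relation) than the paper's outline.

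One small slip: in the density step you assert that $\varphi^{-1}\psi$ ``has trivial local permutations off $T$''. This is not true in general --- the local permutation of $\varphi^{-1}\psi$ at a vertex $v$ below $T$ is $(\prm_{\varphi,\varphi^{-1}\psi(v)})^{-1}$, which need not be the identity. What you actually need, and what does hold, is that this permutation lies in $F_{\ell(v)}$ (because $\varphi$ is $\mathscr P$-patterning), so $\varphi^{-1}\psi\in\Fix_{\mathscr P}(T)$. The conclusion stands; only the intermediate claim is overstated.
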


\begin{proof}
	It is easy to see that $\AAut_{\mathscr{P}}(\tree)$ is a subgroup of $\AAut(\tree)$ and that $\Aut_{\mathscr{P}}(\tree)$ is a closed subgroup of $\Aut(\tree)$.
	Let $T$ be a simple expansion of the subtree of $\tree$ consisting only of the root.
	If the pattern is trivial, then an easy induction argument shows that the intersection $\Fix_{\mathscr{P}}(\tree) \cap \Aut(\tree)$ is trivial, hence $\Aut_{\mathscr{P}}(\tree)$ is discrete.
	Let on the other hand $\mathscr{P}$ be non-trivial, and let $l \in L$ be such that $F_l$ contains a non-trivial element $\tau$. Let $T$ be a finite complete subtree of $\tree$. By making $T$ bigger if necessary we can assume that $T$ has a vertex $v$ with $\ell(v)=l$.
	Then by Remark \ref{rem:constrpattel} we can define an element $g$ of $\Fix_{\mathscr{P}}(T)$ by
	\[
	\prm_{g,w}=\begin{cases}
	\tau & w=v \\
	id & \text{ else.}
	\end{cases}
	\]
	Clearly, $g$ is non-trivial, hence $\Aut_{\mathscr{P}}(\tree)$ is non-discrete.
	
	For the topology, let again $T$ be a finite complete subtree. Let $\varphi \in \AAut_{\mathscr{P}}(\tree)$.
	By Bourbaki \cite{b98}, Chapter III, Prop. 1, it is enough to show that $\varphi \Fix_{\mathscr{P}}(T) \varphi^{-1}$ contains $\Fix_{\mathscr{P}}(T')$ for some finite subtree $T'$ of $\tree$. Let $T_1,T_2$ be finite complete subtrees of $\tree$ such that $\varphi$ has a representative $\varphi \colon \tree \setminus T_1 \to \tree \setminus T_2$, and assume that $T \subset T_1$. Observe that $\varphi \Fix_{\mathscr{P}}(T_1) \varphi^{-1}=\Fix_{\mathscr{P}}(T_2)$, so $\Fix_{\mathscr{P}}(T_2) \leq \varphi \Fix_{\mathscr{P}}(T) \varphi^{-1}$, and we are done.
	
	For the last statement, let again $T$ be a finite subtree of $\tree$. Let $\varphi \in \AAut_{\mathscr{P}}(\tree)$ and let $T_1,T_2$ be complete finite subtrees of $\tree$ such that $\varphi$ has a representative $\varphi \colon \tree \setminus T_1 \to \tree \setminus T_2$. Assume without loss of generality that $T=T_1$. Let $\psi \in V_{\mathscr{C}}$ be such that it has a representative $\psi \colon \tree \setminus T_2 \to \tree \setminus T_1$ with $\psi|_{\mathcal{L}T_2}=(\varphi|_{\mathcal{L}T_1})^{-1}$ and for every vertex $v$ of $\tree \setminus T_2$ holds $\prm_{\psi,v}=id$.
	Then clearly $\psi \circ \varphi \in \Fix_{\mathscr{P}}(T)$, hence $V_{\mathscr{C}}$ is dense.
\end{proof}

\subsection{From shifts to almost automorphisms}

In this section $\frakg=(\mathfrak{v},\mathfrak{e})$ denotes a diconnected and non-circular finite oriented graph.
As in Section \ref{secttfggroupoid} we denote the groupoid associated to it by $\gpoid_\frakg$ and its topological full group by $\tfg(\gpoid_\frakg)$.
The goal of this section is to express $\tfg(\gpoid_\frakg)$ as a group of tree almost automorphisms.

The intuition behind is the following. The group $\tfg(\gpoid_\frakg)$ is a group of homeomorphisms of $X_\frakg$, which is a set of infinite paths. Also, a group of almost automorphisms of a tree $\tree$ can be viewed as a group of homeomorphisms of a set of infinite paths, namely of the boundary $\partial \tree$. It turns out that if we define an appropriate tree together with a colouring structure with the vertex set being the set of finite paths of $\frakg$, then $X_\frakg$ can be identified with the boundary of this tree and $\tfg(\gpoid_\frakg)$ corresponds precisely to the set of label-preserving almost automorphisms.

\begin{theorem}\label{thmshifttoalmaut}
	Let $\frakg=(\mathfrak{v},\mathfrak{e})$ be a diconnected and non-circular finite oriented graph. Let $Y \subset X_\frakg$ be a clopen subset.
	There exists a rooted tree $\tree=(\vertices,\edges)$ with root $v_0$ and a colouring structure $\mathscr{C}$ on $\tree$ such that the group of colour-preserving almost automorphisms $V_{\mathscr{C}}$ satisfies $$\tfg(\gpoid_\frakg|_Y) \cong V_{\mathscr{C}},$$
	i.e. it is isomorphic to the topological full group of the reduction to $Y$ of the groupoid associated to the one-sided shift  of~$\frakg$.
\end{theorem}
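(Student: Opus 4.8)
The plan is to build the tree $\tree$ explicitly from the combinatorics of $\frakg$ and then match the two realizations of the underlying Cantor set. First I would set the vertex set $\vertices$ to be the set of all finite oriented paths in $\frakg$ (including the empty path, which becomes the root $v_0$), where a path of length $n$ is declared to have a path of length $n+1$ as a child precisely when the latter extends the former by one edge. This makes $\tree$ a locally finite rooted tree: a vertex corresponding to a path ending at a vertex $w \in \mathfrak v$ has exactly $\sum_{w'} M_\frakg(w,w')$ children, one for each edge out of $w$. Diconnectedness and non-circularity of $\frakg$ guarantee $\tree$ has no leaves and that $X_\frakg$ is a Cantor space, so $\partial\tree$ is naturally identified with $X_\frakg$: an infinite ray through $\tree$ is exactly a nested sequence of finite paths, i.e. an element of $X_\frakg$. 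Next I would specify the colouring structure $\mathscr C = (L,\ell,\mathcal D,\ccol)$. The natural choice of label set is $L = \mathfrak v$, with $\ell$ sending a nonempty path to its terminal vertex $t(e_n)$; the compatibility condition on children holds because two paths ending at the same vertex $w$ have canonically matching sets of one-edge extensions (indexed by $o^{-1}(w)$), and the spanning condition $\ell^{-1}(l)$ not lying in a proper subtree follows from diconnectedness (every vertex is reachable, so paths ending at $w$ occur at all depths below some point). For the colour sets $D_{l,l'}$ I would take $D_{w,w'} = \{e \in \mathfrak e \mid o(e)=w,\ t(e)=w'\}$, and let $\ccol$ send the child-edge in $\tree$ that appends $e$ to a path, to the edge $e$ itself; this is the required bijection onto $D_{\ell(v),\ell(v')}$ by construction.

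With this dictionary in place, the core of the argument is to identify $\tfg(\gpoid_\frakg|_Y)$ with $V_{\mathscr C}$ (suitably reduced to the clopen set $Y \subseteq X_\frakg = \partial\tree$). The key observation is that a bisection $U$ of $\gpoid_\frakg$ decomposes, using the topology on $\gpoid_\frakg^{(1)}$, into finitely many pieces of the form $\{(x,n-m,y) \mid x \in A,\ y \in B,\ \sigma^n(x)=\sigma^m(y)\}$ where $A$ and $B$ are cylinder sets, i.e. sets of infinite paths with a prescribed finite prefix — equivalently, shadows $\tree_v$ of vertices $v$ of $\tree$. On such a piece, the homeomorphism $r\circ(s|_U)^{-1}$ of $X_\frakg$ acts by stripping the prefix of length $|v_B|$ from $y$ and prepending the prefix of length $|v_A|$, which is exactly the effect of a forest isomorphism $\tree_{v_B} \to \tree_{v_A}$ sending descendants to descendants while preserving the child-colouring $\ccol$ (both the source and target rays are paths in $\frakg$, and the identification $\sigma^n(x)=\sigma^m(y)$ forces the tails, hence all edge-colours below the prefixes, to agree). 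Conversely, given a colour-preserving honest almost automorphism $\varphi\colon \tree\setminus T_1 \to \tree\setminus T_2$, each component map $\tree_v \to \tree_{\varphi(v)}$ with $v \in \mathcal L T_1$ determines a piece of a bisection via $(x,\,|v|-|\varphi(v)|,\,y)$ for $x \in \tree_{\varphi(v)}$, $y\in\tree_v$ matching tails; these pieces glue to a bisection because $T_1,T_2$ are complete subtrees and the leaf sets are partitioned. So the map $U \mapsto (r\circ(s|_U)^{-1})$ and the assignment $\varphi \mapsto$ (its induced homeomorphism of $\partial\tree$) have the same image inside $\Homeo(X_\frakg)$. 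To handle the reduction to $Y$ rather than all of $X_\frakg$: $Y$ is clopen, hence a finite disjoint union of shadows $\tree_{w_1},\dots,\tree_{w_r}$ after passing to a deep enough complete subtree; one can then absorb $Y$ into the colouring structure either by re-rooting (choosing the root to have children $w_1,\dots,w_r$ with appropriate labels, which changes only the top of the tree and is why Definition \ref{def:htgroups} allows the root to have arbitrary valency $k$), or simply by noting that $\tfg(\gpoid_\frakg|_Y) \cong \tfg(\gpoid_\frakg|_{Y'})$ whenever $[1_Y]=[1_{Y'}]$ and choosing $Y$ to be a single shadow — but the cleaner route is the re-rooting, which directly produces a tree $\tree$ and colouring structure $\mathscr C$ with $V_{\mathscr C} \cong \tfg(\gpoid_\frakg|_Y)$.

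Finally I would check that the bijection $\tfg(\gpoid_\frakg|_Y) \to V_{\mathscr C}$ just constructed is a group isomorphism: it is clearly a bijection on the level of homeomorphisms of $Y = \partial\tree$ (both groups being concretely subgroups of $\Homeo(\partial\tree)$ with the same underlying set of homeomorphisms), and composition of bisections corresponds to composition of the induced homeomorphisms, which is composition of almost automorphisms by the remark following Remark \ref{equivclassrmk}; so no further verification is needed beyond unwinding definitions. I expect the main obstacle to be purely bookkeeping rather than conceptual: namely, verifying carefully that \emph{every} bisection (not just the ``monomial'' ones) arises this way, which requires the decomposition of an arbitrary clopen $U \subseteq \gpoid_\frakg^{(1)}$ into finitely many cylinder pieces on which the cocycle $n-m$ is constant — this uses compactness of $U$ together with the explicit basis for the topology on $\gpoid_\frakg^{(1)}$ — and dually, checking that the colouring structure axioms in Definition \ref{def:patterning} are all met, in particular the non-triviality conditions ($\ell^{-1}(l)$ not contained in a proper subtree) which is exactly where diconnectedness of $\frakg$ is used, and the compatibility of child-sets which is where one uses that two paths with the same terminus have the same one-step extensions. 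The sign subtlety in the cocycle (paths can be shortened as well as lengthened, so $|v|-|\varphi(v)|$ may be negative) is harmless because the groupoid $\gpoid_\frakg$ already records the integer $n-m \in \mathbb Z$, and a forest isomorphism between shadows of vertices at different depths is precisely what encodes such a nonzero integer.
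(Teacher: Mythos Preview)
Your proposal is correct and follows essentially the same route as the paper. The paper's Remark~\ref{remtreeconstr} constructs exactly your tree (finite paths in $\frakg$ with the empty path as root, labels $L=\mathfrak v$ given by terminal vertex, colours $D_{w,w'}$ the edges from $w$ to $w'$, and $\ccol$ recording the appended edge), handles a proper clopen $Y$ by passing to the minimal subtree with boundary $Y$ (your ``re-rooting''), and then proves both inclusions $\tfg(\gpoid_\frakg|_Y)\subseteq V_{\mathscr C}$ and $V_{\mathscr C}\subseteq \tfg(\gpoid_\frakg|_Y)$ exactly as you outline, by decomposing an arbitrary bisection into finitely many cylinder pieces $U'_k\times\{n_k-m_k\}\times U_k$ and identifying these with the components of a colour-preserving forest isomorphism.
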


\begin{remark} \label{remtreeconstr}
	We now construct a tree and a colouring structure and will then prove that it satisfies the conclusions of Theorem \ref{thmshifttoalmaut}. We call $\tree$ together with the colouring $\mathscr{C}=(\mathfrak{v},\ell,\mathcal{D},\ccol)$ the \emph{unfolding tree} of the pair $(\frakg,Y)$
	
	Assume first that $Y=X_\frakg$.
	Denote the set of all finite oriented paths in $\frakg$ by
	$\mathcal{P}:=\{(e_0,\dots,e_n) \mid n \in \mathbb{N}, \, e_i \in \mathfrak{e}, \, t(e_i)=o(e_{i+1}) \} \cup \mathfrak{v}$.
	Let $\vertices := \mathcal{P} \cup \{\emptyset\}$.
	We consider $\emptyset$ the root of $\tree$. The children of $\emptyset$ are the paths of length $0$ in $\frakg$, i.e. the elements of $\mathfrak{v}$. Now let $\gamma \in \vertices$ be a finite path in $\mathfrak{g}$. Then the set of children of $\gamma$ is the set of all paths that continue $\gamma$ one step further. Formally, denote by $t(\gamma) \in \mathfrak{v}$ the endpoint of $\gamma$.
	If $\gamma=(e_0,\dots,e_n) \in \mathfrak{e}^{n+1}$ then for every $e$ with $o(e)=t(e_n)$ we simply write $(\gamma,e):=(e_0,\dots,e_n,e)$.
	Then, the set of children of $\gamma$ is $\operatorname{ch}(\gamma) := \{(\gamma,e) \mid e \in \mathfrak{e}, \, o(e)=t(\gamma)\}$. Denote the edge connecting $\gamma$ with $(\gamma,e)$ by $\gamma * e$.
	This completes the definition of the tree $\tree$.
	The set of labels is $\mathfrak{v}$ and the labelling $\ell$ is now simply defined by $\ell(\gamma)=t(\gamma)$ for a finite path $\gamma \in \vertices$. For the child colouring we set $D_{l,l'}=\{e \in \mathfrak{e} \mid o(e)=l, \, t(e)=l' \}$ and $\ccol(\gamma * e)=e$.
	%
	
	Note that $\partial \tree$ can be identified with $X_\frakg$ in an obvious way.
	Now if $Y$ is a proper clopen subset of $X_\frakg$, we can under this identification say that $Y \subset \partial \tree$ and replace $\tree$ with its minimal subtree with boundary $Y$.
\end{remark}

\begin{proof}[Proof of Theorem \ref{thmshifttoalmaut}]
	Let $\tree = (\vertices,\edges)$ be the unfolding tree of $(\frakg,Y)$ as in Remark~\ref{remtreeconstr}.
	Since $\frakg$ is diconnected, each vertex of $\tree$ has for every $v \in \mathfrak{v}$ a descendant with label $v$, so it is clear that $\ell^{-1}(v)$ is not contained in a proper subtree of $\tree$.
	
	The set of infinite paths $Y$ can be identified with the set of infinite paths $\partial  \tree$ in an obvious way. We will first show that under this identification every element of $\tfg(\gpoid_\frakg|_Y)$ is indeed in $V_{\mathscr{C}}$, and then we will prove the converse.
	
	Let $U \subset \mathcal{G}_{\mathfrak{g}}|_Y$ be a bisection.
	Then there exist clopen partitions $\{U_1,\dots,U_n\}$ and $\{U'_1,\dots,U'_n\}$ of $Y$ and 
	positive integers $n_1,\dots,n_n,m_1,\dots,m_n$ satisfying the following.
	The bisection $U$ can be written as $$U = \bigsqcup_{k=1}^n U'_{k} \times \{n_k - m_k \} \times U_k$$
	and $r \circ (s|_{U})^{-1}$ restricts to a homeomorphism $U_k \to U'_k$ for every $1 \leq k\leq n$.
	By making the $U_i$ and $U'_i$ smaller if necessary, we can assume that for every $k$ there exist finite paths $\gamma_k=(e_{k0},\dots,e_{kn_k})$ and
	$\gamma_k'=(e'_{k0},\dots,e'_{km_k})$
	such that
	\begin{align*}
		U_k &= \{(e_i)_{i \in \mathbb{N}} \in Y \mid \forall 0 \leq i \leq n_k \colon e_i = e_{ki} \} \\
		U'_k &= \{(e'_i)_{i \in \mathbb{N}} \in Y \mid \forall 0 \leq i \leq m_k \colon e'_i = e'_{ki} \}
	\end{align*}
	and
	$$r \circ (s|_{U})^{-1}(e_{k0},\dots,e_{kn_k},e_{n_k+1},e_{n_k+2},\dots)
	= (e'_{k0},\dots,e'_{km_k},e_{n_k+1},\dots).$$
	Informally, $U_k$ (resp. $U_k'$) consists of all paths with initial segment $\gamma_k$ (resp. $\gamma_k'$).
	%
	Observe that for $k=0,\dots,n$ holds
	\begin{align*}
		U_k = \partial \tree_{\gamma_k} \quad \text{ and } \quad 
		U'_k = \partial \tree_{\gamma_k'}.
	\end{align*}
	Since $\{U_1,\dots,U_n\}$ is a clopen partition of $Y$, there exists a unique finite complete subtree $T \subset \tree$ with 
	$$\mathcal{L}T = \{\gamma_k \mid k = 1,\dots,n\}.$$
	In the same way there exists a unique finite complete subtree $T' \subset \tree$ with 
	$$\mathcal{L}T' = \{\gamma_k' \mid k = 1,\dots,n\}.$$
	The element $r \circ (s|_{U})^{-1}$ is then the almost automorphism $\tree \setminus T \to \tree \setminus T'$ such that for every vertex 
	$\gamma = (e_{k0},\dots,e_{kn_k},e_{n_k+1},e_{n_k+2},\dots,e_{i'})$ 
	%
	of $\tree \setminus T$ holds 
	$$r \circ (s|_{U})^{-1}(\gamma)=(e'_{k0},\dots,e'_{km_k},e_{n_k+1},e_{n_k+2},\dots,e_{i'}).$$
	This shows $\tfg(\gpoid_\frakg|_Y) \subseteq V_{\mathscr{C}}$.
	
	The reverse inclusion works in the same way, just backwards.
	Let $T,T' \subset \tree$ be finite complete subtrees and let
	$\varphi \colon \tree \setminus T \to \tree \setminus T'$ be a child colour preserving honest almost automorphism. 
	Let $\gamma_k=(e_{k0},\dots,e_{kn_k})$ and
	$\gamma_k'=(e'_{k0},\dots,e'_{km_k})$ be be the leaves of $T$ resp. $T'$, i.e.
	$\mathcal{L}T=\{\gamma_1,\dots,\gamma_k\}$ and $\mathcal{L}T'=\{\gamma_1',\dots,\gamma_k'\}$, and assume that for $i=1,\dots,k$ holds $\varphi(\gamma_i)=\gamma_i'$. Define
	\begin{align*}
		U_k &= \partial \tree_\gamma = \{(e_i)_{i \in \mathbb{N}} \in Y \mid \forall 0 \leq i \leq n_k \colon e_i = e_{ki} \} \\
		U'_k &= \partial \tree_{\gamma'} = \{(e'_i)_{i \in \mathbb{N}} \in Y \mid \forall 0 \leq i \leq m_k \colon e'_i = e'_{ki} \}.
	\end{align*}
	Then
	$$U = \bigsqcup_{k=1}^n U'_{k} \times \{n_k - m_k \} \times U_k$$
	is a bisection such that $r \circ (s|_U)^{-1}=\varphi$.
\end{proof}

\begin{example}
	This generalizes the observation of Matui \cite{m15}, Remark 6.3, based on a result by Nekrashevych \cite{n04}, Proposition 9.6, that the Higman--Thompson group $V_{d,k}$ is isomorphic to $\tfg(\gpoid_\frakg)$ for the graph $\frakg$ as in Figure \ref{fig:ht}.	
	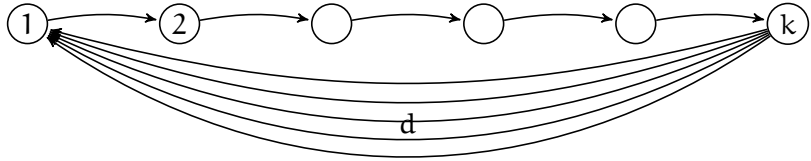
\begin{figure}[H]
		\centering
		\begin{tikzpicture}[->,>=stealth',shorten >=1pt,auto,node distance=2cm,
			semithick]
			
			\node[state,inner sep=2pt,minimum size=0pt]        (A)              {$1$};
			\node[state,inner sep=2pt,minimum size=0pt]        (B) [right of=A] {$2$};
			\node[state,inner sep=2pt,minimum size=0pt]        (C) [right of=B] {$\phantom{1}$};
			\node[state,inner sep=2pt,minimum size=0pt]        (D) [right of=C] {$\phantom{1}$};
			\node[state,inner sep=2pt,minimum size=0pt]        (E) [right of=D] {$\phantom{1}$};
			\node[state,inner sep=2pt,minimum size=0pt]        (F) [right of=E] {$k$};
			
			\path (A) edge [bend left=10]    (B)
			(B) edge [bend left=10]    (C)
			(C) edge [bend left=10]    (D)
			(D) edge [bend left=10]    (E)
			(E) edge [bend left=10]    (F)
			(F) edge [bend left=15]    (A)
			(F) edge [bend left=20] node {$d$}    (A)
			(F) edge [bend left=25]    (A)
			(F) edge [bend left=30]    (A)
			(F) edge [bend left=35]    (A);
		\end{tikzpicture}
		\caption{Matui's graph $\mathfrak{g}$ such that $V_{d,k}=\mathcal{F}(\mathcal{G}_{\mathfrak{g}})$}
		 \label{fig:ht}
	\end{figure}
\end{example}

\subsection{Commensurated subgroups and the local prime content}

\begin{definition}
	Let $G$ be a group.
	Two subgroups $U$ and $V$ of $G$ are called \emph{commensurate} if $U\cap V$ has finite index in both $U$ and $V$.
	Being commensurate is an equivalence relation.
	The set $[U] := \{V \leq G \mid U, V \text{ are commensurate}\}$ is called the \emph{commensurability class} of $U$.
	An element $g \in G$ is said to \emph{commensurate} the subgroup $U$ if $U$ and $gUg^{-1}$ are commensurate.
	A subgroup $U$ is called \emph{commensurated} if every $g \in G$ commensurates $U$.
\end{definition}

Basic examples of commensurated subgroups are finite subgroups, finite index subgroups and normal subgroups.
Other important examples are compact open subgroups of a t.d.l.c. group. Note, however, that a profinite group can have non-closed finite index subgroups, so it is not true in general that the set of compact open subgroups of a t.d.l.c. group is the whole commensurability class of one compact open subgroup.

\begin{lemma} \label{lem:cosrep}
	Let $G$ and $H$ be topological groups and let $\varphi \colon H \to G$ be a continuous homomorphism with dense image. Let $O \leq G$ be an open subgroup.
	Let $\{h_i \mid i \in I\}$ be a set of coset representatives of $\varphi^{-1}(O) \leq H$.
	Then, $\{\varphi(h_i) \mid i \in I\}$ is also a set of coset representatives of $O \leq G$.
	In particular $[G:O]=[H:\varphi^{-1}(O)]$.
\end{lemma}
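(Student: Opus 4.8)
The plan is to show that the map $h_i \mapsto \varphi(h_i)$ gives a bijection between the coset space $H/\varphi^{-1}(O)$ and the coset space $G/O$, which immediately yields the index equality. There are two things to check: that distinct cosets $h_i\varphi^{-1}(O)$ map to distinct cosets $\varphi(h_i)O$ (injectivity), and that every coset of $O$ in $G$ is hit (surjectivity).

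For injectivity, suppose $\varphi(h_i)O = \varphi(h_j)O$, i.e.\ $\varphi(h_j)^{-1}\varphi(h_i) = \varphi(h_j^{-1}h_i) \in O$. Then by definition $h_j^{-1}h_i \in \varphi^{-1}(O)$, so $h_i$ and $h_j$ lie in the same coset of $\varphi^{-1}(O)$, hence $i = j$. This direction is purely formal and uses nothing about density or the topology.

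Surjectivity is where density enters, and this is the one genuine point of the argument. Let $gO$ be an arbitrary coset of $O$ in $G$. Since $O$ is open, $gO$ is open and nonempty, so by density of $\varphi(H)$ there exists $h \in H$ with $\varphi(h) \in gO$, equivalently $\varphi(h)O = gO$. Now $h$ lies in some coset $h_i\varphi^{-1}(O)$ of the chosen transversal, say $h = h_i u$ with $u \in \varphi^{-1}(O)$; then $\varphi(h) = \varphi(h_i)\varphi(u)$ with $\varphi(u) \in O$, so $\varphi(h)O = \varphi(h_i)O = gO$. Thus $gO$ is represented by $\varphi(h_i)$.

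Combining the two, $\{\varphi(h_i) \mid i \in I\}$ is a complete and irredundant set of coset representatives for $O$ in $G$, so $[G:O] = |I| = [H:\varphi^{-1}(O)]$. I do not expect any real obstacle here; the only subtlety worth stating explicitly is that openness of $O$ is exactly what makes the cosets $gO$ open so that density can be applied — without that hypothesis the statement would fail.
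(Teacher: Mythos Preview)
Your proof is correct and follows essentially the same approach as the paper: injectivity is purely formal (if $\varphi(h_i)O=\varphi(h_j)O$ then $h_j^{-1}h_i\in\varphi^{-1}(O)$, so $i=j$), and surjectivity uses that the open coset $gO$ meets the dense image $\varphi(H)$, after which one pulls back to the given transversal. The paper's argument is identical in structure; your write-up is in fact slightly cleaner, as the paper inserts an aside about the kernel that is already subsumed by the injectivity step.
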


\begin{proof}
	Note that for $i \neq j$ holds $h_i h_j^{-1} \notin \varphi^{-1}(O)$, so the $\varphi(h_i)$ represent different cosets of $O$ in $G$. Also note that at most one of the $h_i$ can be in the kernel of $\varphi$ as the kernel is contained in $\varphi^{-1}(O)$.
	
	Let $g \in G$. Since $\varphi(H)$ is dense and $O$ is open, there exists an $h \in H$ such that $g \in \varphi(h)O$.
	Let $i \in I$ be such that $h_i \varphi^{-1}(O) = h \varphi^{-1}(O)$. This implies $\varphi(h_i) O = \varphi(h) O$. In particular, $g \in \varphi(h_i) O$ and therefore $G = \bigsqcup_{i \in I} \varphi(h_i) O$.
\end{proof}

The following definition is due to Gl\"ockner \cite{g06}.

\begin{definition}
	Let $G$ be a totally disconnected, locally compact group. The \emph{local prime content} of $G$, denoted $\lpc(G)$, is the set of all primes $p$ such that for every compact open $U \leq G$ there exists an open subgroup $V \leq U$ with $p \mid [U:V]$. 
\end{definition}

In the introduction we defined the local prime content differently, but the versions are equivalent, see \cite{g06}, p. 449.
We chose to give this formulation now so that we can extend the definition to commensurability classes..

\begin{definition}
	Let $H$ be a topological group and $K$ a commensurated subgroup of $H$.
	The \emph{local prime content} of the commensurability class $[K]$, denoted $\lpc([K])$, is the set of all primes $p$ such for every $L \in [K]$ there exists an $L' \in [K]$ with $L' \leq L$ and $p \mid [L:L']$.
\end{definition}

\begin{proposition} \label{prop:lpccontained}
	Let $H$ be a topological group, $G$ a t.d.l.c. group and let $\varphi \colon H \to G$ be a continuous homomorphism with dense image. Let $U \leq G$ be compact and open. Then $\lpc(G) \subseteq \lpc([\varphi^{-1}(U)])$.
\end{proposition}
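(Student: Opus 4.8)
The plan is to show that every prime $p \in \lpc(G)$ also lies in $\lpc([\varphi^{-1}(U)])$. Fix such a $p$ and let $L \in [\varphi^{-1}(U)]$ be arbitrary; I must produce $L' \in [\varphi^{-1}(U)]$ with $L' \leq L$ and $p \mid [L:L']$. The key point is that commensurability lets me reduce to a single ``nice'' representative, namely $\varphi^{-1}(U)$ itself, by a standard finite-index bookkeeping argument that I carry out at the end. So first I would treat the case $L = \varphi^{-1}(U)$.

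For that case, since $p \in \lpc(G)$ and $U$ is compact open in $G$, by definition there is an open subgroup $W \leq U$ with $p \mid [U:W]$. Now set $L' := \varphi^{-1}(W)$. Because $\varphi$ is continuous and $W$ is open in $G$, $L'$ is open in $H$, and clearly $L' \leq \varphi^{-1}(U) = L$. It remains to compute $[L:L']$. Here I would apply Lemma \ref{lem:cosrep}: the homomorphism $\varphi|_L \colon L \to U$ has dense image in $U$ (since $\varphi(H)$ is dense in $G$ and $U$ is open, one checks $\varphi(L) = \varphi(\varphi^{-1}(U))$ is dense in $U$), $W$ is an open subgroup of $U$, and $(\varphi|_L)^{-1}(W) = \varphi^{-1}(W) = L'$; hence $[L:L'] = [U:W]$, which is divisible by $p$. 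To be careful about whether $\varphi(L)$ really is dense in $U$: if $u \in U$, density of $\varphi(H)$ in $G$ and openness of any open neighbourhood of $u$ inside $U$ gives a point of $\varphi(H)$ in that neighbourhood, and that point automatically lies in $\varphi(H) \cap U = \varphi(\varphi^{-1}(U)) = \varphi(L)$; so yes.

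Finally I would remove the assumption $L = \varphi^{-1}(U)$. Let $L \in [\varphi^{-1}(U)]$ be arbitrary, so $K_0 := L \cap \varphi^{-1}(U)$ has finite index in both. By the previous paragraph there is an open $L'_0 \leq \varphi^{-1}(U)$ with $p \mid [\varphi^{-1}(U) : L'_0]$; intersecting, $K_1 := L'_0 \cap L$ lies in $[\varphi^{-1}(U)]$ and is contained in $L$. The index $[L : K_1]$ is finite, but I need $p$ to divide it, which is not automatic — so instead I would iterate: applying the base case to the representative $K_1$ repeatedly produces a descending chain $K_1 \geq K_2 \geq \cdots$ within $[\varphi^{-1}(U)]$ with $p \mid [K_j : K_{j+1}]$ each time, so $p^{n} \mid [K_1 : K_{n+1}]$ for all $n$. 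Since all the $K_j$ are commensurate, $[K_1 : K_n] \mid [L : K_n]\cdot[K_1:K_1\cap L]$ ... — more cleanly: pick $n$ with $p^n > [L : K_1]$; then from $[L:K_{n+1}] = [L:K_1]\,[K_1:K_{n+1}]$ and $p^n \mid [K_1:K_{n+1}]$ we cannot have $p \nmid [L:K_{n+1}]$ unless $p^n \mid [L:K_1]$, contradicting the choice of $n$. Hence $p \mid [L : K_{n+1}]$ and $K_{n+1} \in [\varphi^{-1}(U)]$ with $K_{n+1} \leq L$, as required. I expect the main obstacle to be exactly this last step: carrying the divisibility by $p$ from the canonical representative $\varphi^{-1}(U)$ across the commensurability class, since finite index alone does not preserve divisibility by a fixed prime, and the iteration-and-counting argument is the way around it.
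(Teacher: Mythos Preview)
Your base case --- the paragraph handling $L = \varphi^{-1}(U)$ --- is correct and is essentially the paper's core computation. The gap is in the last paragraph. You write ``applying the base case to the representative $K_1$ repeatedly produces a descending chain $K_1 \geq K_2 \geq \cdots$ with $p \mid [K_j : K_{j+1}]$''. But your base case only manufactures such a subgroup when the starting group is of the form $\varphi^{-1}(W)$ for some compact open $W \leq G$; the group $K_1 = L'_0 \cap L$, with $L$ an arbitrary member of the commensurability class, is not of that form. So invoking the base case for $K_1$ is assuming exactly the statement you are trying to prove for a general representative. The final counting argument is fine once a suitable chain exists, but the construction of the chain is circular as written.

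The argument is salvageable: iterate inside $G$ to get compact open subgroups $U = W_0 \geq W_1 \geq \cdots$ with $p \mid [W_j:W_{j+1}]$, and run your counting with $K_n := L \cap \varphi^{-1}(W_n)$ (using $p^n \mid [\varphi^{-1}(U):\varphi^{-1}(W_n)]$ rather than divisibility of each step $[K_j:K_{j+1}]$, which need not hold). The paper, however, bypasses all of this. After replacing $L$ by $L \cap \varphi^{-1}(U)$ it observes that $\overline{\varphi(L)}$ has finite index in $U = \overline{\varphi(\varphi^{-1}(U))}$ and is therefore itself compact open in $G$; then it applies the definition of $\lpc(G)$ \emph{directly to $\overline{\varphi(L)}$} to obtain an open $V \leq \overline{\varphi(L)}$ with $p \mid [\overline{\varphi(L)}:V]$, and Lemma~\ref{lem:cosrep} for $\varphi|_L \colon L \to \overline{\varphi(L)}$ gives $[L:\varphi^{-1}(V)\cap L] = [\overline{\varphi(L)}:V]$. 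No iteration or index-chasing is needed, because the definition of $\lpc(G)$ already quantifies over all compact open subgroups of $G$, not just the fixed $U$; the key move you are missing is to push $L$ forward to a compact open subgroup of $G$ and work there.
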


\begin{proof}
	Let $K=\varphi^{-1}(U)$.
	Let $p \in \lpc(G)$. Let $L \in [K]$. By replacing $L$ with $K \cap L$ we can assume that $L \leq K$. Since $\varphi(L)$ has finite index in $\varphi(K)$, the closure $\overline{\varphi(L)}$ has finite index in $U=\overline{\varphi(K)}$, hence $\overline{\varphi(L)}$ is compact and open. There exists a compact and open $V \leq \overline{\varphi(L)}$ with $p \mid [\overline{\varphi(L)}:V]$. By Lemma~\ref{lem:cosrep}, applied to $\varphi|_L \colon L \to \overline{\varphi(L)}$, holds $[\overline{\varphi(L)}:V] = [L : \varphi^{-1}(V) \cap L]$. In particular $\varphi^{-1}(V) \cap L$ is commensurate to $L$ and the index is divisible by $p$. Hence $p$ is in the local prime content of $[\varphi^{-1}(U)]$.
\end{proof}

\section{Dense embeddings of topological full groups}

In this section we use the re-interpretation of topological full groups coming from one-sided shifts as tree almost automorphism groups to embed these topological full groups densely into t.d.l.c. groups. We use the local prime content to distinguish these t.d.l.c. groups from one another.

For the following lemma, recall from Definition \ref{def:patterning} that a pattern $\mathscr{F}$ is a set of permutation groups indexed by labels.

\begin{lemma} \label{lem:locpermquot}
	Let $\tree=(\vertices,\edges)$ be a tree, let $\mathscr{C}=(L,\ell,\mathcal{D},\ccol)$ be a colouring structure on $\tree$ and let $\mathscr{P}=(L,\ell,\mathcal{D},\ccol,\mathscr{F})$ be a patterning. Let $T$ be a finite complete subtree of $\tree$ and let $T'$ be a simple expansion of $T$ at a vertex $v \in \mathcal{L}T$. Then $$\Fix(T)/\Fix(T') \cong \Fix(T) \cap V_{\mathscr{C}} /\Fix(T') \cap V_{\mathscr{C}} \cong F_{\ell(v)}.$$
\end{lemma}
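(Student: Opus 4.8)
The plan is to produce explicit maps in both directions and show they are mutually inverse. First I would observe that an element $g \in \Fix(T)$ fixes every vertex of $T$, so in particular it fixes the leaf $v$; since $T'$ is the simple expansion of $T$ at $v$, the only vertices of $T'$ not already in $T$ are the children of $v$, and these are acted on by $g$ via the local permutation $\prm_{g,v} \in \Sym(D_{\ell(v)})$. Using Remark \ref{rem:constrpattel}, an element of $\Fix(T)$ is uniquely determined by its restriction to the leaves of $T$ together with all its local permutations; since $g$ fixes $\mathcal{L}T$ pointwise and fixes every vertex of $T$, the data reduces to the local permutations at the vertices of $\tree \setminus T$. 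So I would define $\Phi \colon \Fix(T) \to \Sym(D_{\ell(v)})$ by $\Phi(g) = \prm_{g,v}$, and check it is a homomorphism using the cocycle identity $\prm_{g \circ h, w} = \prm_{g, h(w)} \circ \prm_{h, w}$ from the Remark after Definition \ref{defvell}, specialized to $w = v$ and $h(v) = v$.

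Next I would identify the kernel and image of $\Phi$. An element $g \in \Fix(T)$ lies in $\Fix(T')$ precisely when it also fixes the children of $v$ pointwise, i.e. when $\prm_{g,v} = \mathrm{id}$ — but I need to be a little careful: fixing the children of $v$ as a set with trivial permutation is exactly the condition $\prm_{g,v} = \mathrm{id}$, and fixing all of $T'$ pointwise is fixing $T$ pointwise (automatic) plus fixing those children pointwise. Hence $\Ker \Phi = \Fix(T')$. For surjectivity onto $\Sym(D_{\ell(v)})$ — wait, the claim is $\Fix(T)/\Fix(T') \cong F_{\ell(v)}$, not $\Sym(D_{\ell(v)})$; so I should instead note that $\Fix(T)$ consists of automorphisms with no constraint on $\prm_{g,v}$ only if we do not impose the patterning, whereas here the ambient group is $\Aut(\tree)$ and $\Fix(T)$ means $\Fix_{\mathscr{P}}(T)$. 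Re-reading the statement, $\Fix(T)$ here is shorthand inside the patterning context; so I would take $\Fix(T) = \Fix_{\mathscr{P}}(T)$, and then the constraint $\prm_{g,v} \in F_{\ell(v)}$ is part of being in the group, giving $\Phi(\Fix_{\mathscr{P}}(T)) = F_{\ell(v)}$. Surjectivity follows from Remark \ref{rem:constrpattel}: given $\tau \in F_{\ell(v)}$, define $g$ with $\prm_{g,v} = \tau$ and all other local permutations trivial and $g|_{\mathcal{L}T} = \mathrm{id}$; this $g$ is a well-defined $\mathscr{P}$-patterning automorphism fixing $T$. The first isomorphism theorem then gives $\Fix_{\mathscr{P}}(T)/\Fix_{\mathscr{P}}(T') \cong F_{\ell(v)}$.

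For the middle term, I would run the identical argument with $V_{\mathscr{C}}$ in place of $\AAut_{\mathscr{P}}(\tree)$: restriction of $\Phi$ to $\Fix(T) \cap V_{\mathscr{C}}$ has kernel $\Fix(T') \cap V_{\mathscr{C}}$, and the surjection onto $F_{\ell(v)}$ still holds because the witnessing element $g$ constructed above is a tree automorphism with all local permutations in the relevant $F_l$ (indeed trivial away from $v$), hence lies in $\AAut_{\mathscr{P}}(\tree) \cap \Aut(\tree)$ and in particular in $V_{\mathscr{C}}$ (it preserves child colours up to the single permutation $\tau$ — here I should use that $\tau \in F_{\ell(v)} \leq \Sym(D_{\ell(v)})$ and that $g$ is colour-preserving on every edge outside $o^{-1}(v)$, so $g \in V_{\mathscr{C}}$ exactly when... ). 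The one genuine subtlety, and the step I expect to be the main obstacle, is checking that the element witnessing surjectivity actually lands in $V_{\mathscr{C}}$ rather than merely in $\AAut_{\mathscr{P}}(\tree)$: an element of $V_{\mathscr{C}}$ must have a child-colour-preserving representative, meaning $\prm_{g,w} = \mathrm{id}$ for all $w$, so I cannot use a nontrivial $\tau$ directly. Instead, for the middle isomorphism I would argue that $\Fix(T) \cap V_{\mathscr{C}}$ surjects onto $F_{\ell(v)}$ because $F_{\ell(v)}$ acting on the children of $v$ can be realized by a colour-preserving almost automorphism after passing to a deeper simple expansion and permuting whole subtrees — i.e. use that the labelling is homogeneous (the first property in Definition \ref{def:patterning}) to find, for each $\tau \in F_{\ell(v)}$, an honest almost automorphism that permutes the subtrees $\tree_{(\gamma, e)}$ for $e \in D_{\ell(v), l'}$ according to $\tau$ while preserving all child colours below; the net effect on local permutations at $v$ is $\tau$. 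Verifying this realization is compatible with the colour structure, and that the resulting map is well-defined modulo $\Fix(T') \cap V_{\mathscr{C}}$, is the technical heart; everything else is the first isomorphism theorem applied three times with compatible maps.
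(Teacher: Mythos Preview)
Your argument for $\Fix_{\mathscr{P}}(T)/\Fix_{\mathscr{P}}(T') \cong F_{\ell(v)}$ via the map $g \mapsto \prm_{g,v}$ is exactly what the paper does, and it is correct.

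The difficulty you flag at the end, however, is not real, and the workaround you sketch is unnecessary. You write that an element of $V_{\mathscr{C}}$ must have a representative with $\prm_{g,w}=\mathrm{id}$ for \emph{all} $w$, and hence that the element $g$ with $\prm_{g,v}=\tau$ and $\prm_{g,w}=\mathrm{id}$ for $w\neq v$ cannot lie in $V_{\mathscr{C}}$. But the colour-preserving condition only requires $\prm_{g,w}=\mathrm{id}$ for vertices $w$ in the \emph{domain} of the chosen representative, namely $\tree\setminus T_1$ for some finite complete $T_1$. Take $T_1=T'$: since $v$ is an interior vertex of $T'$ (not a leaf), the vertex $v$ does not belong to $\tree\setminus T'$, so its local permutation is irrelevant for that representative. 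Every other local permutation of $g$ is trivial, so the restriction $g|_{\tree\setminus T'}$ is child-colour-preserving and $g\in V_{\mathscr{C}}$. Thus the very same element witnesses surjectivity of $\Phi|_{\Fix(T)\cap V_{\mathscr{C}}}$ onto $F_{\ell(v)}$, and your direct argument goes through without the detour about ``permuting whole subtrees''.

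For comparison, the paper handles the first isomorphism differently and more cheaply: rather than producing an explicit preimage in $V_{\mathscr{C}}$, it observes that the composite $\Fix_{\mathscr{P}}(T)\cap V_{\mathscr{C}} \hookrightarrow \Fix_{\mathscr{P}}(T) \twoheadrightarrow \Fix_{\mathscr{P}}(T)/\Fix_{\mathscr{P}}(T')$ is surjective simply because $\Fix_{\mathscr{P}}(T')$ is open and $V_{\mathscr{C}}$ is dense (Lemma~\ref{lem:basicpropaautp}), with kernel $\Fix_{\mathscr{P}}(T')\cap V_{\mathscr{C}}$. That gives the first isomorphism immediately, and then only the map $g\mapsto\prm_{g,v}$ on $\Fix_{\mathscr{P}}(T)$ needs the explicit construction. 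Your route computes both quotients via $\Phi$ directly; both work once the confusion about $V_{\mathscr{C}}$ is cleared up.
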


\begin{proof}
	The map $\Fix(T) \cap V_{\mathscr{C}} \to \Fix(T)/\Fix(T')$ is a surjective homomorphism because $\Fix(T')$ is open, and clearly its kernel is $\Fix(T') \cap V_{\mathscr{C}}$. This shows the first isomorphism.
	
	The map $\Fix(T) \to F_{\ell(v)}$ defined by $g \mapsto \prm_{g,v}$ clearly has kernel $\Fix(T')$. It is a surjective homomorphism, since by Remark \ref{rem:constrpattel} every $\tau \in F_{\ell(v)}$ has a preimage defined by
	\[
	\prm_{g,w}=\begin{cases}
	\tau & w=v \\
	id & \text{ else.}
	\end{cases}
	\]
\end{proof}

Now we determine the local prime content of groups of patterning tree almost automorphisms.

\begin{proposition} \label{prop:lpcisproduct}
	Let $\tree=(\vertices,\edges)$ be a tree, let $\mathscr{C}=(L,\ell,\mathcal{D},\ccol)$ be a colouring structure on $\tree$ and let $\mathscr{P}=(L,\ell,\mathcal{D},\ccol,\mathscr{F})$ be a patterning. Let $U \leq \AAut_{\mathscr{P}}(\tree)$ be compact and open.
	Then $\lpc(\AAut_{\mathscr{P}}(\tree)) = \lpc([U \cap V_{\mathscr{C}}])$, and this local prime content equals the set of prime factors of the integer $\prod_{F \in \mathscr{F}}|F|$.
\end{proposition}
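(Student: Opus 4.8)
The plan is to establish the chain
$$\lpc\big(\AAut_{\mathscr P}(\tree)\big)\ \subseteq\ \lpc\big([U\cap V_{\mathscr C}]\big)\ \subseteq\ P\ \subseteq\ \lpc\big(\AAut_{\mathscr P}(\tree)\big),$$
where $P$ is the set of prime divisors of $\prod_{F\in\mathscr F}|F|=\prod_{l\in L}|F_l|$; equality throughout is then the assertion. Write $G:=\AAut_{\mathscr P}(\tree)$, a t.d.l.c. group in which $\Aut_{\mathscr P}(\tree)$ is a compact open subgroup and $V_{\mathscr C}$ is dense (Lemma \ref{lem:basicpropaautp}). First I would observe that $[U\cap V_{\mathscr C}]$ is independent of the compact open $U$: for compact open $U_1,U_2\le G$ the intersection $U_1\cap U_2$ is compact open of finite index in each, so Lemma \ref{lem:cosrep}, applied to the dense inclusions $V_{\mathscr C}\cap U_i\hookrightarrow U_i$ and the open subgroup $U_1\cap U_2$, shows that $V_{\mathscr C}\cap U_1\cap U_2$ has finite index in both $V_{\mathscr C}\cap U_1$ and $V_{\mathscr C}\cap U_2$. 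Thus I may take $U=\Fix_{\mathscr P}(T_0)$ for a fixed finite complete subtree $T_0$. I also fix an exhaustion $T_0\subsetneq T_1\subsetneq\cdots$ of $\tree$ by finite complete subtrees, each $T_{i+1}$ a simple expansion of $T_i$ at a leaf $v_i$; then the $\Fix_{\mathscr P}(T_n)$ form a neighbourhood basis of the identity, and Lemma \ref{lem:locpermquot} gives $\Fix_{\mathscr P}(T_{i+1})\trianglelefteq\Fix_{\mathscr P}(T_i)$ with quotient $F_{\ell(v_i)}$, hence $[\Fix_{\mathscr P}(T_0):\Fix_{\mathscr P}(T_n)]=\prod_{i<n}|F_{\ell(v_i)}|$.

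The first inclusion is exactly Proposition \ref{prop:lpccontained} applied to $V_{\mathscr C}\hookrightarrow G$, since $U\cap V_{\mathscr C}$ is the preimage of $U$. For the last inclusion, let $p\in P$, say $p\mid|F_l|$. Then $F_l\ne 1$, so every vertex of label $l$ has children (otherwise $D_l=\emptyset$ and $F_l$ would be trivial), and $\ell^{-1}(l)$ is infinite because a finite set of vertices spans a proper subtree (if $\tree$ were finite, the labelling conditions of Definition \ref{def:patterning} would force every $F_l$ trivial, so $P=\emptyset$ and there is nothing to prove). Hence label $l$ occurs infinitely often among the $v_i$, so $p^k$ divides $[\Fix_{\mathscr P}(T_0):\Fix_{\mathscr P}(T_n)]$ for suitable $n$ and all $k$; thus $p$ divides the order of the profinite group $\Fix_{\mathscr P}(T_0)$ to an infinite power, so a Sylow pro-$p$ subgroup of it is infinite. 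Being an infinite pro-$p$ subgroup of $G$, it witnesses $p\in\lpc(G)$ via the characterisation of the local prime content recalled in the introduction.

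For the middle inclusion — the heart of the matter — put $K:=U\cap V_{\mathscr C}=\Fix_{\mathscr P}(T_0)\cap V_{\mathscr C}$. An automorphism has a child colour preserving representative precisely when all but finitely many of its local permutations are trivial, so $K=\bigcup_n K_n$ with $K_n:=\{g\in\Fix_{\mathscr P}(T_0)\mid \prm_{g,v}=\mathrm{id}\text{ for all }v\in\tree\setminus T_n\}$, an increasing chain of finite subgroups. The key point is that $K_n$ is a complement of $\Fix_{\mathscr P}(T_n)$ in $\Fix_{\mathscr P}(T_0)$: given $g\in\Fix_{\mathscr P}(T_0)$, the automorphism $k$ agreeing with $g$ on $T_n$ and extending canonically below it (trivial local permutations) lies in $K_n$, and $k^{-1}g$ — fixing $T_n$ pointwise — lies in $\Fix_{\mathscr P}(T_n)$, while $K_n\cap\Fix_{\mathscr P}(T_n)=1$ since an element fixing $T_n$ with trivial permutations below it is the identity. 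Therefore $|K_n|=[\Fix_{\mathscr P}(T_0):\Fix_{\mathscr P}(T_n)]=\prod_{i<n}|F_{\ell(v_i)}|$, whose prime divisors all lie in $P$. Taking $L=K$ in the definition of $\lpc([K])$: any $L'\le K$ commensurate with $K$ has finite index $m$, and for $n$ large enough that $K_n$ contains a transversal of $L'$ in $K$ we get $m=[K_n:L'\cap K_n]$, which divides $|K_n|$; so no prime outside $P$ divides $[K:L']$, whence no prime outside $P$ lies in $\lpc([K])=\lpc([U\cap V_{\mathscr C}])$. This middle inclusion is the step I expect to be the main obstacle: the commensurability class of $U\cap V_{\mathscr C}$ could a priori involve primes outside $P$ (as it does, e.g., for $\mathbb{Z}\le\mathbb{Z}_p$), and ruling this out forces the explicit description $K=\bigcup_n K_n$ with $|K_n|=\prod_{i<n}|F_{\ell(v_i)}|$, which rests on the complement identity $K_n\Fix_{\mathscr P}(T_n)=\Fix_{\mathscr P}(T_0)$; the remaining steps follow routinely from Lemmas \ref{lem:cosrep}, \ref{lem:locpermquot} and Proposition \ref{prop:lpccontained}, the only subtlety being that the second labelling condition in Definition \ref{def:patterning} is exactly what makes each label recur along the exhaustion.
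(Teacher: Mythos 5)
Your chain of inclusions coincides with the paper's proof on two of its three steps: the inclusion $\lpc(\AAut_{\mathscr{P}}(\tree))\subseteq\lpc([U\cap V_{\mathscr{C}}])$ is Proposition \ref{prop:lpccontained} in both, and for $P\subseteq\lpc(\AAut_{\mathscr{P}}(\tree))$ the paper applies Lemma \ref{lem:locpermquot} directly to a subtree having a leaf of label $l$ inside the given compact open subgroup, while you detour through an infinite Sylow pro-$p$ subgroup of $\Fix_{\mathscr{P}}(T_0)$ and the infinite-pro-$p$ characterisation of the local prime content quoted in the introduction; both versions hinge on the recurrence of the label $l$, forced by the second labelling axiom. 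The genuine divergence is the inclusion $\lpc([U\cap V_{\mathscr{C}}])\subseteq P$: the paper argues by contradiction, passing to the normal core, using Cauchy and local finiteness to produce an element of order $p$ in $\Fix_{\mathscr{P}}(T)\cap V_{\mathscr{C}}$, and killing it level by level because its local permutations lie in $F_{\ell(v)}$ with $p\nmid|F_{\ell(v)}|$; you instead exhibit $U\cap V_{\mathscr{C}}$ as an increasing union of finite transversals of the $\Fix_{\mathscr{P}}(T_n)$ and bound every finite index by Lagrange, which gives a little more (control of all finite indices, not just of $p$-divisibility) from the same underlying rigidity. Note that both arguments, yours via the complement identity and the paper's via ``$x$ must act like an element of $F_{\ell(v)}$'', tacitly read $\Fix_{\mathscr{P}}(T)$ as the group whose local permutations at every vertex outside $T$ lie in the prescribed $F_{\ell(v)}$ -- the reading already required by Lemma \ref{lem:locpermquot} -- so this is not a defect of your proof relative to the paper.

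One step does fail as written: for an arbitrary simple-expansion exhaustion $(T_n)$ the sets $K_n$ are not subgroups. For instance, let $w$ be an expansion vertex with children $a_1,a_2$ such that $a_1$ is interior to $T_n$ while $a_2$ is a leaf of $T_n$; if $g\in K_n$ has $\prm_{g,w}$ interchanging the colours of the edges towards $a_1$ and $a_2$ and $\prm_{g,a_1}\neq id$, then $\prm_{g^2,a_2}=\prm_{g,a_1}\neq id$ although $a_2$ lies in $\tree\setminus T_n$, so $g^2\notin K_n$. Consequently ``$m=[K_n:L'\cap K_n]$ divides $|K_n|$'' is not an application of Lagrange to a group, and similarly ``increasing chain of finite subgroups'' is unjustified. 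The repair is immediate: run the same argument with the balls around the root instead of general $T_n$ (each is reached by simple expansions, and being level-closed it is invariant under every automorphism, since automorphisms preserve depth); then the corresponding $K_n$ are honest finite subgroups, the index formula and the complement identity persist, and your transversal argument goes through verbatim. A smaller point: the parenthetical claim that a finite $\tree$ would force every $F_l$ to be trivial is not justified by Definition \ref{def:patterning}; it is cleaner to say that the standing setting (compactness of the $\Fix_{\mathscr{P}}(T)$, identification of $\partial\tree$ with a Cantor set) presupposes $\tree$ infinite.
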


\begin{proof}
	By Proposition \ref{prop:lpccontained} we have $\lpc(\AAut_{\mathscr{P}}(\tree)) \subseteq \lpc([U \cap V_{\mathscr{C}}])$.
	
	Let $p$ be a prime factor of $\prod_{F \in \mathscr{F}}|F|$. Let $l$ be a label such that $p\mid |F_l|$. By making $U$ smaller if necessary we can assume that $U=\Fix_{\mathscr{P}}(T)$ for a finite complete subtree $T$ of $\tree$. We can furthermore assume that one of the leaves of $T$ has label $l$ since $\ell^{-1}(l)$ is not contained in a proper subtree of $\tree$. Now Lemma \ref{lem:locpermquot} shows that $p \in \lpc(\AAut_{\mathscr{P}}(\tree))$.
	
	Let on the other hand $p$ be a prime number which does not divide $\prod_{F \in \mathscr{F}}|F|$.
	Let $T$ be the simple expansion of the finite subtree of $\tree$ consisting only of the root.
	Assume by contradiction that there exists a group $O \leq \Fix_{\mathscr{P}}(T) \cap V_{\mathscr{C}}$ such that $[\Fix_{\mathscr{P}}(T) \cap V_{\mathscr{C}}:O]$ is finite and divisible by $p$. By replacing $O$ by its finitely many $(\Fix_{\mathscr{P}}(T) \cap V_{\mathscr{C}})$-conjugates we can assume it is normal in $\Fix_{\mathscr{P}}(T) \cap V_{\mathscr{C}}$. Take an element $y \in \Fix_{\mathscr{P}}(T) \cap V_{\mathscr{C}}$ such that its coset $yO$ in $(\Fix_{\mathscr{P}}(T) \cap V_{\mathscr{C}})/O$ has order $p$.
	Then, because every element of $\Fix_{\mathscr{P}}(T) \cap V_{\mathscr{C}}$ has finite order, a power $x$ of $y$ has order $p$.
	Let $v$ be a leaf of $T$. Then $x$ acts on the children of $v$. But since it must act like an element of $F_{\ell(v)}$ and its order does not divide $|F_{\ell(v)}|$, this implies that $x$ acts trivially on the children of $v$. Inductively, we get that $x$ is the trivial element, which is a contradiction.
%
\end{proof}

This proposition allows to distinguish some groups of patterning tree almost automorphisms from others. Still, it would be nice to know more precise conditions under which they are, or are not, isomorphic.

\begin{theorem} \label{thm:mainthm}
	Let $\frakg$ be a diconnected, non-circular oriented finite graph. Let $Y \subset X_\frakg$ be a clopen subset.
	Let $P$ be a finite set of primes.
	
	Then, there exists a t.d.l.c. completion $\varphi \colon \tfg(\gpoid_\frakg|_Y) \to G$ of $\tfg(\gpoid_\frakg|_Y)$ such that for every compact open subgroup $U \leq G$ holds $\lpc(G)=\lpc([\varphi^{-1}(U)])=P$.
	
	More precisely, there exists a tree $\tree$ with a colouring structure $\mathscr{C}=(L,\ell,\mathcal{D},\ccol)$ 
	with $\tfg(\gpoid_\frakg|_Y)\cong V_{\mathscr{C}}$
	and a patterning $\mathscr{P}=(L,\ell,\mathcal{D},\ccol,\mathscr{F})$ such that $P$ is the set of prime divisors of $\prod_{F \in \mathscr{F}} |F|$.
\end{theorem}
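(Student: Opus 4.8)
The plan is to combine Matsumoto's rigidity theorem (Theorem~\ref{thm:matsthm}) with the unfolding construction of Theorem~\ref{thmshifttoalmaut}, and then to equip the unfolding tree with a nontrivial pattern whose ``size'' realizes $P$, using Proposition~\ref{prop:lpcisproduct} to compute the local prime content. The main idea is that Theorem~\ref{thm:matsthm} lets us replace $\frakg$ by a different graph $\frakg'$ with the same associated topological full group but whose vertices branch in a way that gives us room to impose large finite permutation groups as local actions.

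First I would set up the numerics. Write $M$ for the adjacency matrix of $\frakg$. I want to find a diconnected, non-circular finite oriented graph $\frakg'$ with adjacency matrix $M'$ and a clopen $Y' \subset X_{\frakg'}$ such that (i) there is an isomorphism $H_0(\gpoid_\frakg) \to H_0(\gpoid_{\frakg'})$ sending $[1_Y]$ to $[1_{Y'}]$, (ii) $\det(\mathrm{id}-M^t) = \det(\mathrm{id}-(M')^t)$, so that Theorem~\ref{thm:matsthm} applies and $\tfg(\gpoid_\frakg|_Y) \cong \tfg(\gpoid_{\frakg'}|_{Y'})$, and (iii) $\frakg'$ has a vertex $v_*$ (indeed, a vertex of each relevant label class, but one suffices by irreducibility) with enough outgoing edges that we can choose a patterning whose pattern $\mathscr{F}$ has $\prod_{F \in \mathscr{F}}|F|$ with prime divisor set exactly $P$. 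For instance, if $P = \{p_1,\dots,p_r\}$, one wants some vertex with at least $N := \max_i p_i$ outgoing edges (or a product structure realizing a group like $\prod_i C_{p_i}$ or a symmetric group $\mathrm{Sym}(N)$ on the children, while keeping $F_l$ trivial on all other labels so no extra primes appear). The standard device for (i)--(iii) is to pass to a \emph{higher block presentation} (higher power / higher edge graph) of $\frakg$: these operations do not change $H_0$, the pair $(H_0,[1_Y])$, or $\det(\mathrm{id}-M^t)$ up to the hypotheses of Theorem~\ref{thm:matsthm}, and they inflate the branching; alternatively one can use that $\frakg$ is flow-equivalent / strong-shift-equivalent to suitable graphs. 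I would cite Matsumoto's framework (as in \cite{m15}, Section~6) for the invariance of $(H_0,[1_Y],\det)$ under these moves.

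Next, apply Theorem~\ref{thmshifttoalmaut} to the pair $(\frakg',Y')$: this produces a rooted tree $\tree$ with colouring structure $\mathscr{C}=(L,\ell,\mathcal{D},\ccol)$, $L=\mathfrak{v}'$, such that $\tfg(\gpoid_{\frakg'}|_{Y'}) \cong V_{\mathscr{C}}$. Because the labelling is inherited from the endpoints of paths and $\frakg'$ is diconnected, each label class $\ell^{-1}(l)$ is not contained in a proper subtree (as verified in the proof of Theorem~\ref{thmshifttoalmaut}). Now choose the pattern: set $F_l$ to be the trivial group for every label $l$ except the chosen label(s), and at the chosen label $v_*$ let $F_{v_*} \leq \Sym(\bigsqcup_{l'} D_{v_*,l'})$ be a subgroup preserving the partition with $|F_{v_*}|$ having prime divisor set $P$ — this is where step one's branching condition is used. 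This gives a patterning $\mathscr{P}=(L,\ell,\mathcal{D},\ccol,\mathscr{F})$ with $\prod_{F\in\mathscr{F}}|F| = |F_{v_*}|$, whose set of prime divisors is $P$. By Lemma~\ref{lem:basicpropaautp}, $V_{\mathscr{C}}$ is dense in $G := \AAut_{\mathscr{P}}(\tree)$, which is t.d.l.c. (it has the open profinite subgroup $\Aut_{\mathscr{P}}(\tree)$, and it is totally disconnected since that subgroup is profinite and non-discrete). Composing, $\varphi \colon \tfg(\gpoid_\frakg|_Y) \cong \tfg(\gpoid_{\frakg'}|_{Y'}) \cong V_{\mathscr{C}} \hookrightarrow G$ is a t.d.l.c. completion. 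Finally, Proposition~\ref{prop:lpcisproduct} gives $\lpc(G) = \lpc([U \cap V_{\mathscr{C}}])$ equal to the set of prime factors of $\prod_{F\in\mathscr{F}}|F| = |F_{v_*}|$, which is $P$; and for a compact open $U \leq G$, $\varphi^{-1}(U)$ is commensurate to $U \cap V_{\mathscr{C}}$ (both pull back to the same commensurability class via density, by Lemma~\ref{lem:cosrep} / Proposition~\ref{prop:lpccontained}), so $\lpc([\varphi^{-1}(U)]) = P$ as well. This proves both the ``precise'' statement and the headline one.

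The main obstacle I expect is step one: verifying that one can always find $\frakg'$ with the three properties simultaneously — in particular that enlarging the out-degree of some vertex can be done while exactly preserving the pair $(H_0(\gpoid_\frakg),[1_Y])$ \emph{and} $\det(\mathrm{id}-M^t)$, so that Theorem~\ref{thm:matsthm} applies on the nose. The higher-power graph $\frakg^{[n]}$ (with adjacency matrix $M^n$) changes neither $\mathrm{Coker}(\mathrm{id}-M^t)$ (since it is a quotient invariant under powers in the appropriate sense) nor, by a standard determinant identity, the relevant determinant; but one must check the normalization in \cite{m15} carefully and track what happens to the distinguished class $[1_Y]$ under the block-code isomorphism, and one must ensure $\frakg^{[n]}$ is still non-circular (it is, since $\frakg$ is) and diconnected (likewise). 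A secondary point to handle cleanly is the degenerate cases $P = \emptyset$ (take the trivial patterning, so $G = \AAut(\tree)$ already works with $\frakg' = \frakg$) and $|P|=1$ (a single cyclic group $C_p$ on children suffices, requiring only out-degree $\geq p$ at one vertex, which one higher power achieves). None of these is deep, but they are the places where care is needed; once step one is in hand, steps two and three are essentially bookkeeping with the cited results.
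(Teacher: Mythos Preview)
Your overall architecture matches the paper's exactly: replace $\frakg$ by another graph $\frakg'$ with the same $(H_0,[1_Y],\det)$-data via Theorem~\ref{thm:matsthm}, take the unfolding tree of $(\frakg',Y')$, impose a pattern supported at one label realizing $P$, and invoke Lemma~\ref{lem:basicpropaautp} and Proposition~\ref{prop:lpcisproduct}. Steps two and three of your outline are fine.

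The gap is precisely where you flag it, but it is more serious than ``check the normalization carefully'': your proposed construction of $\frakg'$ does not work. Higher block presentations (the edge-shift, or more generally the $n$-block coding) give a topologically conjugate one-sided shift, hence an isomorphic groupoid, so all invariants are preserved --- but the out-degree of a vertex $e$ in the edge graph is just the out-degree of $t(e)$ in $\frakg$, so no new branching is created and you cannot fit a group of order divisible by a large prime. The higher \emph{power} graph, with adjacency matrix $M^n$, does inflate out-degrees, but it changes both invariants: already for the $1\times 1$ matrix $M=(2)$ one has $\det(\mathrm{id}-M)=-1$ and $\Coker(\mathrm{id}-M)=0$, while $\det(\mathrm{id}-M^2)=-3$ and $\Coker(\mathrm{id}-M^2)\cong\mathbb{Z}/3\mathbb{Z}$. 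So neither move, nor any combination of them, produces a $\frakg'$ satisfying your conditions (i)--(iii).

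The paper sidesteps graph moves entirely and builds the matrix by hand. One writes $\Coker(\mathrm{id}-M^t)\cong\bigoplus_i\mathbb{Z}/d_i\mathbb{Z}$, starts from the diagonal matrix $A_0=\operatorname{diag}(-1,-d_1,\dots,-d_n)$ (with an extra $-1$ inserted if needed to match the sign of $\det(\mathrm{id}-M^t)$), and then applies elementary integer row and column operations --- add the first row to every other row, then add $N:=\prod_{p\in P}p$ times the first column to every other column --- to obtain a matrix $A$ with $\Coker(A)=\Coker(A_0)$, $\det(A)=\det(A_0)$, and such that $\widetilde M:=\mathrm{id}-A^t$ has all entries strictly positive (hence is the adjacency matrix of a diconnected non-circular graph) with at least one entry equal to $N$. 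Lemma~\ref{lem:homimage} then furnishes the clopen $\widetilde Y$ with $[1_{\widetilde Y}]$ matching $[1_Y]$, and Theorem~\ref{thm:matsthm} applies. At the distinguished vertex one takes $F$ to be a product of disjoint $p$-cycles, one for each $p\in P$ (not $\Sym(\max_i p_i)$, which would introduce all primes below $\max_i p_i$), and $F_l$ trivial elsewhere. Once you replace your first step with this explicit matrix construction, the rest of your argument goes through as written.
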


\begin{proof}
	Let $M$ be the adjacency matrix of $\frakg$. The following claim is a generalization of the discussion after Theorem 6.12 in \cite{m15}.
	
	\emph{Claim:} There exists a matrix $A$ over $\mathbb{Z}$ with the following properties:
	\begin{itemize}
		\item $\det(A)= \det(id-M^t)$
		\item $\Coker(A) \cong \Coker(id-M^t)$
		\item $id-A^t$ is the adjacency matrix of a diconnected non-circular finite oriented graph with the property that one of its entries is $N := \prod_{p \in P}p$.
	\end{itemize}
	
	We construct $A$ as follows. Let $d_1 \geq \dots \geq d_n \geq 0$ be integers such that $\Coker(id-M^t) \cong \bigoplus_{i=1}^n \mathbb{Z}/d_i \mathbb{Z}$.
	Let 
	Let $A_0$ be one of the two diagonal matrix $\operatorname{diag}(-1,-d_1,\dots,-d_n)$ or $\operatorname{diag}(-1,-1,-d_1,\dots,-d_n)$, namely the one which has the same determinant as $id-M^t$.
	Add the first row of $A_0$ to all other rows to get a new matrix $A_1$.
	To obtain $A$, add $N$ times the first column of $A_1$ to all its other columns.
	Thus,
	\[
	A = \left( \begin{array}{ccccc}
	           -1 & -N     & -N  & \dots &  -N \\
	           -1 & -d_1-N & -N  & \dots &  -N \\
	           -1 & -N & -d_2-N  & \dots &  -N \\
	           \vdots & \vdots  & \vdots & \ddots  & \vdots \\
	           -1 & -N & -N  &  \dots  & -d_n-N
	           \end{array}  \right) 
	\]
	or
	\[
	A = \left( \begin{array}{ccccc}
	-1 & -N     & -N  & \dots &  -N \\
	-1 & -N-1     & -N  & \dots &  -N \\
	-1 & -N & -d_1-N  & \dots &  -N \\
	\vdots & \vdots  & \vdots & \ddots  & \vdots \\
	-1 & -N & -N  &  \dots  & -d_n-N
	\end{array}  \right) 
	\]
	satisfies the claim.
	
	Let $\widetilde{M}=id-A^t$. Let $\widetilde{\frakg}$ be a finite oriented graph with adjacency matrix $\widetilde{M}$. Recall from the discussion above Lemma \ref{lem:homimage} that $\Coker(id-M^t)\cong H_0(\gpoid_\frakg)$, so there exists an isomorphism $\varphi \colon H_0(\gpoid_{\frakg}) \to H_0(\gpoid_{\widetilde{\gpoid_\frakg}})$.
	By Lemma \ref{lem:homimage} there exists a clopen subset $\widetilde{Y}$ of $X_{\widetilde{\frakg}}$ such that $\varphi([1_Y])=[1_{\widetilde{Y}}]$. By Theorem \ref{thm:matsthm} the topological full groups of $\gpoid_{\frakg}|_Y$ and $\gpoid_{\widetilde{\frakg}}|_{\widetilde{Y}}$ are isomorphic.
	Let $\tree$ be the unfolding tree of $\widetilde{\frakg}$ and $\widetilde{Y}$ together with the colouring structure $\mathscr{C}=(L,\ell,\mathcal{D},\ccol)$ as in Remark \ref{remtreeconstr}. Let $\mathscr{P}=(L,\ell,\mathcal{D},\ccol,\mathscr{F})$ be a patterning such that $P$ is the set of prime divisors of $\prod_{F \in \mathscr{F}} |F|$. For example, take $v, w$ vertices of $\widetilde{\frakg}$ such that there are $N$ many edges from $v$ to $w$, let $F_{v}$ be generated by a product of disjoint $p$-cycles for every $p \in P$, and let $F_{w}$ be the trivial group for every label $w \neq v$. Theorem \ref{thmshifttoalmaut} implies that $\tfg(\gpoid_{\widetilde{\gpoid_\frakg}}|_{\widetilde{Y}}) \cong V_{\mathscr{C}}$.
	 By Proposition \ref{prop:lpcisproduct} we know that $\lpc(\AAut_{\mathscr{P}}(\tree))=P$, and by Lemma \ref{lem:basicpropaautp} the group $V_{\mathscr{C}}$ is a dense subgroup of $\AAut_{\mathscr{P}}(\tree)$.
\end{proof}

\begin{remark}
	Let $\frakg$ be a diconnected, non-circular oriented graph.
	Note that every subgroup $G \leq \Aut(\frakg)$ fixing all vertices induces a patterning $\mathscr{P}$ on the unfolding tree $\tree$. The local prime content of $\AAut_{\mathscr{P}}(\tree)$ is then the set of prime factors of $|G|$.
\end{remark}

For Thompson's group $V$ we can give much nicer graphs as constructed in the proof.

\begin{example}\label{ex:v}
	For Thompson's group $V$ and prime number $p$ one can take a graph $\frakg$ with the following adjacency matrix
	\[
	M_p = \left( \begin{array}{cc}
	1 & 1 \\
	1 & p
	\end{array} \right).
	\]
	It is easily verified that $\det(id-M_p^t)=-1$. Therefore, by Theorem \ref{thm:matsthm} the topological full group $\tfg(\gpoid_\frakg)$ is isomorphic to $V$.
	Taking the group generated by graph automorphisms that fix all vertices, but act like a $p$-cycle on the outgoing leaves of the vertex that has $p$ loops, one obtains a completion of $V$ with local prime content $\{p\}$.
	\begin{figure}[H]
		\centering
		\begin{tikzpicture}[->,>=stealth',shorten >=1pt,node distance=10cm,
		semithick]
		
		\def \r {2cm}
		
		\node[state,inner sep=2pt,minimum size=0pt]        (1)    at (0,0)     {};
		\node[state,inner sep=2pt,minimum size=0pt]        (2)    at (-2,0)    {};
		
		\path 
		
		(1) edge [bend right=20]    (2)
		(2) edge [bend right=20]    (1)
		
		(2) edge [loop left, looseness=50] (2)
		
		(1) edge [loop right, looseness=30] (1)
		(1) edge [loop right, looseness=40] (1)
		(1) edge [loop right, looseness=50] node {$\cdots$} (1)
		(1) edge [loop right, looseness=90] (1)
		(1) edge [loop right, looseness=100] (1);
		\node[] at (1,-0.5) {$p$ loops};
		\end{tikzpicture}
		\caption{A graph $\mathfrak{g}$ for prime content $\{p\}$}
	\end{figure}
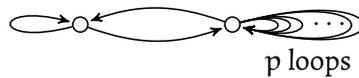
	
	For not necessarily different prime numbers $p_1,\dots,p_n$ with $n \geq 2$ one can take a graph $\frakg$ with the following adjacency matrix
	\[
	M(i,j)= \begin{cases}
	p_i & j = i+1\mod n \\
	\prod_{k=1}^n p_k - 2 & i=j=1 \\
	0 & \text{else.}
	\end{cases}
	\]
	Again it is easily verified that $\det(id-M_p^t)=-1$ and by Theorem \ref{thm:matsthm} the topological full group $\tfg(\gpoid_\frakg)$ is isomorphic to $V$.
	Taking the group generated by graph automorphisms that fix all vertices and loops, but for every $i$ act like a $p_i$-cycle on the outgoing leaves of the vertex $i$, one obtains a completion of $V$ with local prime content $\{p_1,\dots,p_n\}$.
	
	\begin{figure}[H]
		\centering
		\begin{tikzpicture}[->,>=stealth',shorten >=1pt,node distance=10cm,
		semithick]
		
		\def \r {2cm}
		
		\node[state,inner sep=2pt,minimum size=0pt]        (1)         at (0:\r)           {$1$};
		\node[state,inner sep=2pt,minimum size=0pt]        (2)  at (72:\r)   {$2$};
		\node[state,inner sep=2pt,minimum size=0pt]        (3)  at (144:\r) {$3$};
		\node[state,inner sep=2pt,minimum size=0pt]        (4)  at (216:\r) {$4$};
		\node[state,inner sep=2pt,minimum size=0pt]        (5)  at (288:\r) {$5$};
		
		\path 
		
		(1) edge [bend right=10]    (2)
		(1) edge [bend right=20]    (2)
		
		(2) edge [bend right=10]    (3)
		(2) edge [bend right=20]    (3)
		(2) edge [bend right=30]    (3)
		
		(3) edge [bend right=10]    (4)
		(3) edge [bend right=20]   (4)
		(3) edge [bend right=30]    (4)
		(3) edge [bend right=40]   (4)
		(3) edge [bend right=50]    (4)
		
		(4) edge [bend right=10] (5)
		(4) edge [bend right=20] (5)
		(4) edge [bend right=30] (5)
		(4) edge [bend right=40] (5)
		(4) edge [bend right=50] (5)
		(4) edge [bend right=60] (5)
		(4) edge [bend right=70] (5)
		
		(5) edge [bend right=10] (1)
		(5) edge [bend right=20] (1)
		(5) edge [bend right=30] (1)
		(5) edge [bend right=40] (1)
		(5) edge [bend right=50] (1)
		(5) edge [bend right=60] (1)
		(5) edge [bend right=70] (1)
		(5) edge [bend right=80] (1)
		(5) edge [bend right=90] (1)
		(5) edge [bend right=100] (1)
		(5) edge [bend right=110] (1)
		
		(1) edge [loop right, looseness=10] (1)
		(1) edge [loop right, looseness=15] (1)
		(1) edge [loop right, looseness=20] (1)
		(1) edge [loop right, looseness=25] (1)
		(1) edge [loop right, looseness=30] (1)
		(1) edge [loop right, looseness=35] node {$\cdots$} (1)
		(1) edge [loop right, looseness=55] (1)
		(1) edge [loop right, looseness=60] (1)
		(1) edge [loop right, looseness=65] (1)
		(1) edge [loop right, looseness=70] (1)
		(1) edge [loop right, looseness=75] (1);
		\node[] at (4.5,-1) {$2308$ loops};
		\end{tikzpicture}
		\caption{A graph $\mathfrak{g}$ for prime content $\{2,3,5,7,11\}$}
	\end{figure}
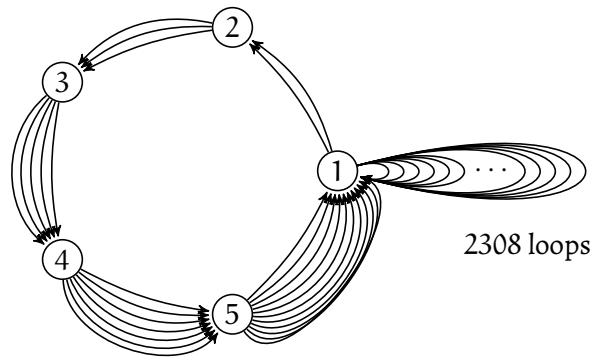
\end{example}

The next proposition shows in particular that the completions we construct in Theorem \ref{thm:mainthm} are topologically simple if $V_{\mathscr{C}}$ is simple. For a group $G$ we denote its commutator subgroup by $D(G)$.

\begin{proposition}\label{prop:simple}
	Let $\tree$ be a tree with colouring structure $\mathscr{C}=(L,\ell,\mathcal{D},\ccol)$ 
	and a patterning $\mathscr{P}=(L,\ell,\mathcal{D},\ccol,\mathscr{F})$.
	Then $\overline{D(\AAut_{\mathscr{P}}(\tree))} = \overline{D(V_\mathscr{C})}$, and this group is topologically simple.
\end{proposition}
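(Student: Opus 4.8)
The plan is to establish two things: first that $\overline{D(\AAut_{\mathscr{P}}(\tree))} = \overline{D(V_\mathscr{C})}$, and second that this common closed subgroup is topologically simple. For the first equality, one inclusion is immediate since $V_{\mathscr{C}} \leq \AAut_{\mathscr{P}}(\tree)$ gives $\overline{D(V_\mathscr{C})} \subseteq \overline{D(\AAut_{\mathscr{P}}(\tree))}$. For the reverse inclusion, I would use density of $V_{\mathscr{C}}$ in $\AAut_{\mathscr{P}}(\tree)$ (Lemma \ref{lem:basicpropaautp}): given a commutator $[\varphi,\psi]$ with $\varphi,\psi \in \AAut_{\mathscr{P}}(\tree)$, I want to approximate it by commutators of elements of $V_{\mathscr{C}}$. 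Fix a finite complete subtree $T$ on which $\varphi,\psi$ are represented; the subgroup $\Fix_{\mathscr{P}}(T)$ is open and the quotient of $\AAut_{\mathscr{P}}(\tree)$ by the normal closure of $\Fix_{\mathscr{P}}(T)$ detects only finitely much data (the action on $\mathcal{L}T'$ for suitable $T'$ and finitely many local permutations). Since $V_{\mathscr{C}}$ surjects onto this finite quotient, I can pick $\varphi', \psi' \in V_{\mathscr{C}}$ agreeing with $\varphi, \psi$ modulo $\Fix_{\mathscr{P}}(T')$; then $[\varphi',\psi']$ lies in $[\varphi,\psi]\Fix_{\mathscr{P}}(T')$ up to elements of $\Fix_{\mathscr{P}}(T')$, so $[\varphi,\psi] \in \overline{D(V_\mathscr{C})}$. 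Taking products and closures gives $\overline{D(\AAut_{\mathscr{P}}(\tree))} \subseteq \overline{D(V_\mathscr{C})}$.

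For topological simplicity of $K := \overline{D(V_\mathscr{C})}$, I would invoke Matui's result, Theorem \ref{shiftthm}: via the isomorphism $V_{\mathscr{C}} \cong \tfg(\gpoid_\frakg|_Y)$ from Theorem \ref{thmshifttoalmaut}, the abstract group $D(V_\mathscr{C})$ is simple, and moreover every non-trivial subgroup of $V_{\mathscr{C}}$ normalized by $D(V_\mathscr{C})$ contains $D(V_\mathscr{C})$. Let $N$ be a non-trivial closed normal subgroup of $K$. I would argue that $N \cap D(V_\mathscr{C})$ is non-trivial: this is the crux, because a priori $N$ could meet the dense subgroup $V_{\mathscr{C}}$ trivially. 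Here I would use that $N$ is open in $K$ or find a non-trivial element of $N$ of a controlled form. One clean route: since $N$ is non-trivial and closed, and $D(V_\mathscr{C})$ is dense in $K$, conjugates of $N$ by $D(V_\mathscr{C})$ generate a dense subgroup; combined with the structure of $K$ (it has a neighbourhood basis of the identity given by groups of the form $\Fix_{\mathscr{P}}(T) \cap K$, each of which is topologically generated by "rigid stabilizers" supported in subtrees $\tree_v$), one shows $N$ must contain one of these rigid stabilizers, hence contains a non-trivial element of $V_{\mathscr{C}}$. Then $N \cap V_{\mathscr{C}}$ is non-trivial and normalized by $D(V_\mathscr{C})$, so it contains $D(V_\mathscr{C})$, whence $N \supseteq \overline{D(V_\mathscr{C})} = K$. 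This shows $K$ is topologically simple.

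The main obstacle I anticipate is the step showing that a non-trivial closed normal subgroup $N \trianglelefteq K$ must intersect $V_{\mathscr{C}}$ non-trivially — equivalently, must contain some non-trivial "locally supported" element. The difficulty is that topological normality is a weaker hypothesis than abstract normality, and one cannot directly apply Matui's simplicity statement to $N$ unless $N$ is known to contain honest tree-automorphism elements. I would handle this by a commutator/double-commutator trick: take $1 \neq g \in N$; since $g$ moves some point of $\partial\tree$, choose a small clopen set $U$ with $gU \cap U = \emptyset$, pick $h \in D(V_\mathscr{C})$ supported inside $U$ and non-trivial, and form $[g, h] = g h g^{-1} h^{-1} \in N$; since $ghg^{-1}$ is supported in $gU$, disjoint from $U$, the commutator $[g,h]$ equals $(ghg^{-1}) h^{-1}$ is a non-trivial element of $N$ supported in $U \sqcup gU$, which by shrinking lands inside a single subtree $\tree_v$ and can be taken in $V_{\mathscr{C}}$ (indeed in $D(V_\mathscr{C})$ by a further commutator if needed to kill local permutations). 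Once $N \cap D(V_\mathscr{C}) \neq 1$, simplicity of $D(V_\mathscr{C})$ forces $D(V_\mathscr{C}) \leq N$, and taking closures finishes the argument. The only care needed is ensuring $[g,h] \neq 1$, which follows because $h \neq 1$ and $ghg^{-1}$ acts on a disjoint clopen set, so they cannot cancel.
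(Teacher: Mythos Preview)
Your argument for the equality $\overline{D(\AAut_{\mathscr{P}}(\tree))}=\overline{D(V_\mathscr{C})}$ is the same as the paper's: approximate each commutator $[g,h]$ by commutators of elements of $V_\mathscr{C}$ using density (Lemma~\ref{lem:basicpropaautp}) and continuity of the commutator map. Your description via finite quotients is more elaborate than needed, but correct.

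For topological simplicity the paper takes a different route. Rather than using simplicity of $D(V_\mathscr{C})$, it observes that $\AAut_{\mathscr{P}}(\tree)$ is the topological full group of its own action groupoid on $\partial\tree$ and applies Matui's general simplicity theorem (\cite{m15}, Theorem~4.16) to conclude that $D(\AAut_{\mathscr{P}}(\tree))$ is abstractly simple. Then for a proper closed normal $N\trianglelefteq \overline{D(\AAut_{\mathscr{P}}(\tree))}$ one gets $N\cap D(\AAut_{\mathscr{P}}(\tree))=\{1\}$, hence $[N,D(\AAut_{\mathscr{P}}(\tree))]=\{1\}$, so $N$ is central and in particular abelian; an explicit hands-on computation with commutators (two cases, according to whether a chosen $n\in N$ has order $2$ or at least $3$) then produces a $D$-conjugate of $n$ that fails to commute with $n$, a contradiction.

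Your plan has two soft spots. First, you invoke Theorem~\ref{thmshifttoalmaut} to identify $V_\mathscr{C}$ with a shift topological full group, but that theorem goes from shifts to trees, not the other way; the proposition is stated for an \emph{arbitrary} colouring structure, and nothing in the paper guarantees that every such $V_\mathscr{C}$ arises from a shift. The paper sidesteps this entirely by working with $D(\AAut_{\mathscr{P}}(\tree))$ and the action groupoid, which is available in full generality. Second, in your commutator step the element $[g,h]$ with $g\in N$ and $h\in V_\mathscr{C}$ need not lie in $V_\mathscr{C}$: conjugation by $g$ can introduce nontrivial local permutations on $g(\operatorname{supp}h)$, so ``by shrinking'' does not help. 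Your instinct about a further commutator is right, though: if $U,\,gU,\,g^{-1}U$ are pairwise disjoint and $h,h'\in V_\mathscr{C}$ are supported in $U$, then $[g,h]$ acts as $h^{-1}$ on $U$, so $[[g,h],h']=[h^{-1},h']\in D(V_\mathscr{C})\cap N$, and one can choose $h,h'$ with $[h^{-1},h']\neq 1$. With these two fixes your strategy goes through; the paper's choice of $D(\AAut_{\mathscr{P}}(\tree))$ simply makes both issues disappear, at the cost of the explicit case analysis at the end.
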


\begin{proof}
	It is clear that $\overline{D(\AAut_{\mathscr{P}}(\tree))} \supseteq \overline{D(V_\mathscr{C})}$.
	For the reverse inclusion, observe that by density of $V_\mathscr{C}$, see Lemma \ref{lem:basicpropaautp}, for all $g,h \in \AAut_{\mathscr{P}}(\tree)$ there exist sequences $(g_n)$ and $(h_n)$ in $V_\mathscr{C}$ converging to $g$ and $h$ respectively. Then $[g_n,h_n]$ converges to $[g,h]$, hence $D(\AAut_{\mathscr{P}}(\tree)) \subseteq \overline{D(V_\mathscr{C})}$ and the equality follows.
	
	The group $\AAut_{\mathscr{P}}(\tree)$, considered as discrete group, is its own topological full group for the \'etale action groupoid of itself acting on $\partial \tree$. Hence by \cite{m15}, Theorem 4.16, the group $D(\AAut_{\mathscr{P}}(\tree))$ is simple (note that the assumptions countability and essential freeness from the quoted theorem by Matui are not satisfied in our case, but these assumptions are not used in its proof).
	
	Let now $N \unlhd \overline{D(\AAut_{\mathscr{P}}(\tree))}$ be a proper closed normal subgroup. By simplicity $N$ must intersect $D(\AAut_{\mathscr{P}}(\tree))$ trivially. Therefore, $N$ has to be abelian. In particular, every nontrivial element $n \in N$ has to commute with all its $D(\AAut_{\mathscr{P}}(\tree))$-conjugates. We will show that this is not possible.
	
	\emph{Case 1:} The element $n$ has order at least $3$.
	
	Let $x \in \partial \tree$ be such that $n^{-1}(x), x$ and $n(x)$ are different. Let $U \subset \partial \tree$ be a clopen neighbourhood of $x$ and $g,h \in \AAut_{\mathscr{P}}(\tree)$ satisfying the following
	\begin{itemize}
		\item $U, n(U), n^{-1}(U), g(U), g^{-1}(n(U)), h(n(U))$ are pairwise disjoint
		\item $g|_U = h|_U$
		\item $g|_{n^{-1}(U)} = h|_{n^{-1}(U)}$
		\item $h|_{g^{-1}(n(U))} = id$.
	\end{itemize}
    Then $\varphi := [g,h]$ satisfies
    \begin{itemize}
    	\item $\varphi|_U = id$
    	\item $\varphi|_{n^{-1}(U)} = id$
    	\item $\varphi|_{n(U)}=h|_{n(U)}$.
    \end{itemize}
    Now observe that $n \cdot \varphi n \varphi^{-1} \neq  \varphi n \varphi^{-1} \cdot n$, because $n[\varphi,n](x) \in n(U)$ and $\varphi n \varphi^{-1}(x) \in h(U)$.
    
    \emph{Case 2:} The element $n$ has order $2$.
    
    Let $x \in \partial \tree$ be such that $x$ and $n(x)$ are different. Let $U$ be a clopen neighbourhood of $x$ and $g,h \in \AAut_{\mathscr{P}}(\tree)$ satisfying the following
	\begin{itemize}
	\item $U, g(U), h^{-1}(U), h^{-1}(g(U)), n(U), g(n(U)), n(g(U)), g(n(g)), h^{-1}(n(g(U)))$ are pairwise disjoint
	\item $g(g(U))=U$
	\item $g|_{h^{-1}(U)}=id$
	\item $g|_{h^{-1}(g(U))}=id$
	\item $g|_{n(U)}=h|_{n(U)}$
	\item $g|_{h^{-1}(n(g(U)))}=id$.
\end{itemize}
Then $\varphi := [g,h]$ satisfies
\begin{itemize}
	\item $\varphi(U)=g(U)$
	\item $\varphi(\varphi(U))=U$
	\item $\varphi|_{n(U)} = id$
	\item $\varphi(n(\varphi(U)))=g(U)$.
\end{itemize}
Now observe that $n \cdot \varphi n \varphi^{-1} \neq  \varphi n \varphi^{-1} \cdot n$, because $n[\varphi,n](x) \in g(n(g(U)))$ and $\varphi n \varphi^{-1}(x) \in n(g(U))$.
\end{proof}

 \bibliographystyle{alpha}
 \bibliography{references}
\end{document}